\documentclass[a4paper,12pt,draft]{amsart}

\usepackage[margin=30mm]{geometry}
\usepackage{amssymb, amsmath, amsthm, amscd}

\usepackage[T1]{fontenc}
\usepackage{eucal,mathrsfs,dsfont}
\usepackage{color}
\usepackage{comment}


\renewcommand{\leq}{\leqslant}
\renewcommand{\geq}{\geqslant}
\renewcommand{\le}{\leqslant}
\renewcommand{\ge}{\geqslant}

\definecolor{mno}{rgb}{0.5,0.1,0.5}

\newcommand{\R}{\mathds R}

\newcommand{\Pp}{\mathds P}
\newcommand{\dd}{\mathrm d}
\newcommand{\Ee}{\mathds E}

\newcommand{\capa}{\textup{Cap}}
\newcommand{\I}{\mathds 1}

\def\supp{{\rm supp}}

\newtheorem{theorem}{Theorem}[section]
\newtheorem{lemma}[theorem]{Lemma}
\newtheorem{proposition}[theorem]{Proposition}
\newtheorem{corollary}[theorem]{Corollary}
\numberwithin{equation}{section}

\theoremstyle{definition}

\newtheorem{example}[theorem]{Example}
\newtheorem{remark}[theorem]{Remark}
\newtheorem{assumption}[theorem]{Assumption}
\newtheorem*{ack}{Acknowledgement}

\begin{document}
\allowdisplaybreaks
\title[Rate Functions for Symmetric Markov Processes] {\bfseries
Rate functions for symmetric markov processes via heat kernel}
\author{Yuichi Shiozawa\qquad Jian Wang}

\thanks{\emph{Y. Shiozawa:} Graduate School of Natural Science and Technology,
Department of Environmental and Mathematical Sciences,
Okayama University, Okayama 700-8530, Japan \texttt{shiozawa@ems.okayama-u.ac.jp}}

  \thanks{\emph{J.\ Wang:}
   School of Mathematics and Computer Science, Fujian Normal University, 350007 Fuzhou, P.R. China. \texttt{jianwang@fjnu.edu.cn}}

\date{}

\maketitle

\begin{abstract}
By making full use of heat kernel estimates,
we establish the integral tests on the zero-one laws
of upper and lower bounds
for the sample path ranges of symmetric Markov processes.
In particular, these results concerning on upper rate bounds are applicable
for local and non-local Dirichlet forms,
while lower rate bounds are investigated in both subcritical setting and critical setting.

\noindent \textbf{Keywords:} Upper (lower) rate function; heat kernel estimate
\medskip

\noindent \textbf{MSC 2010:} 60G51; 60G52; 60J25; 60J75.
\end{abstract}
\allowdisplaybreaks

\section{Introduction}

We are concerned with the sample path ranges of symmetric Markov
processes generated by regular Dirichlet forms. In particular, we
study the upper and lower bounds of the ranges for all sufficiently
large time. Our purpose in this paper is to establish the integral
tests on the zero-one laws of these bounds by making full use of
heat kernel estimates.

\ \

Let $B=\{B_t\}_{t\geq 0}$ be the Brownian motion on $\R^d$ starting from the origin.
{\it Kolmogorov's test} (see, e.g., \cite[4.12]{IM}) says that,
if $g(t)$ is a positive function on $(0,\infty)$ such that
$g(t)\nearrow\infty$ as $t\rightarrow \infty$,
then
\begin{equation}\label{upper-bm}
\Pp\left(\text{$|B_t| \leq \sqrt{t}g(t)$ for all sufficiently large $t$}\right)=\text{$1$ or $0$}
\end{equation}
according as
$$\int_{1}^{\infty}\frac{1}{t}g(t)^de^{-g(t)^2/2}\,\dd t \ \text{converges or not}.$$
The function $\sqrt{t}g(t)$ is called an {\it upper rate function}
of $B$ if the probability in (\ref{upper-bm}) is $1$.
This function describes the forefront of the Brownian particles.
On the other hand, {\it Dvoretzky and Erd\"os' test} \cite{DE51} (see also \cite[4.12]{IM}) says that
for $d\geq 3$ (i.e., $B$ is transient),
if $h(t)$ is a positive function on $(0,\infty)$ such that $h(t)\searrow 0$ as $t\rightarrow \infty$,
then
\begin{equation}\label{lower-bm}
\Pp\left(\text{$|B_t| \geq  \sqrt{t}h(t)$ for all sufficiently large $t$}\right)=\text{$1$ or $0$}
\end{equation}
according as
$$\int_{1}^{\infty}\frac{1}{t}h(t)^{d-2}\,\dd t \ \text{converges or not}.$$
The function $\sqrt{t}h(t)$ is called a {\it lower rate function} of
$B$ if the probability in (\ref{lower-bm}) is $1$. This function
describes the rear front of the Brownian particles. Even for $d=2$
(i.e., $B$ is recurrent and can not hit any point), the lower rate
function describes how close a particle can go to the origin for all
sufficiently large time. {\it Spitzer's test} \cite{Sp} says that if
$h(t)$ is a positive function on $(0,\infty)$ such that
$h(t)\searrow 0$ as $t\rightarrow \infty$, then (\ref{lower-bm}) is
valid if
$$\int_{1}^{\infty}\frac{1}{t|\log h(t)|}\,\dd t \ \text{converges or not}.$$

These tests on the zero-one 
laws
of rate functions are extended
to symmetric diffusion processes
(see \cite{Ichi,BS} for lower rate functions)
and (symmetric) stable processes on $\R^d$
(see \cite{Kh} for upper rate functions,
and \cite{He, K, T,TW} for lower rate functions).
The full heat kernel estimates are used  especially
for the proof of the zero-probability part.
In this paper, we will get the zero-one 
laws
of rate functions
by developing the approach of the results as mentioned before.
In particular, for the upper rate functions, we use a similar approach of Kim, Kumagai and Wang \cite{KKW}.

\ \

There are a number of results on the rate functions for more general
symmetric Markov processes generated by regular Dirichlet forms.
Grigor'yan and Kelbert \cite{GK} and Grigor'yan \cite{G}
characterized the upper and lower rate functions of the Brownian
motion on a Riemannian manifold in terms of the volume growth rate
(see also \cite{BS}).
These results are refined and extended to Brownian motions on
Riemannian manifolds and symmetric diffusion processes. In
particular, Grigor'yan and Hsu \cite{GHs}, Hsu and Qin \cite{HQ} and
Ouyang \cite{Ou} obtained upper rate functions in terms of the
volume and coefficients growth rates. These results are further
extended to symmetric Markov chains on weighted graphs
(\cite{Hu,HuS}) and symmetric Markov processes with no killing
inside (\cite{S1}). For the lower rate functions, the integral test
by Grigor'yan (\cite{G}) is also extended to symmetric Markov
processes with no killing inside (\cite{S}). Here we note that the
full heat kernel estimates are not needed in general for the results
as mentioned before; however, they are mainly concerned with the
one-probability part. Moreover, in these papers we need to assume
that the distance function belongs locally to the domain of the
Dirichlet forms, but such assumption does not hold for fractals. Our
results in this paper hold for general metric measure spaces
including fractals. It should be emphasized that our results are
applicable to a class of symmetric $\beta$-stable-like  processes with
$\beta\geq 2$.

\ \

The remainder of this paper is arranged as follows. In the next
section, we recall some analytic and probabilistic notions for the
theory of Dirichlet forms and symmetric Hunt processes. Then, by
fully using the heat kernel estimates, we will establish the
integral tests on the zero-one laws of upper and lower bounds for
the sample path ranges of symmetric Markov processes in Sections
\ref{section3} and \ref{section4}, respectively.

\section{Analytic and Probabilistic Notions}

\subsection{Analytic notions}
We recall the notions of Dirichlet forms by following \cite{CF} and \cite{FOT}.
Let $(M,d)$ be a locally compact separable metric space,
and $\mu$ be a positive Radon measure on $M$ with full support.
We write $C(M)$ for the totality of continuous functions on $M$,
and $C_0(M)$ for that of continuous functions on $M$ with compact support.
Let $({\mathcal E}, {\mathcal F})$ be a  Dirichlet form on $L^2(M;\mu)$;
that is, $({\mathcal E}, {\mathcal F})$ is a closed  Markovian symmetric form on $L^2(M;\mu)$.
We assume that $({\mathcal E}, {\mathcal F})$ is {\it regular}, i.e.\
${\mathcal F}\cap C_0(M)$ is dense both in ${\mathcal F}$ with respect to the norm $\sqrt{{\mathcal E}_1}$,
and in $C_0(M)$ with respect to the uniform norm. Here we define
$${\mathcal E}_1(u,u)={\mathcal E}(u,u)+\|u\|_{L^2(M;\mu)}^2, \quad u\in {\mathcal F},$$ where $\|u\|_{L^p(M;\mu)}=(\int_M |u|^p\,\dd \mu)^{1/p}$ for all $p\in(1,\infty)$.

Throughout this paper, we assume that
the Beurling-Deny decomposition of $({\mathcal E}, {\mathcal F})$ (see \cite[Theorem 3.2.1 and Lemma 4.5.4]{FOT}) is given by
\begin{equation*}\label{b-d}
{\mathcal E}(u,v)={\mathcal E}^{(c)}(u,v)
+\iint_{M\times M\setminus{\rm diag}}(u(x)-u(y))(v(x)-v(y))\,n(\dd x, \dd y)
\end{equation*}
for $u,v\in {\mathcal F}\cap C_0(M)$, where
\begin{itemize}
\item $({\mathcal E}^{(c)}, {\mathcal F}\cap C_0(M))$ is a symmetric form enjoying the strong local property
(see \cite[p.120]{FOT} for definition);
\item $n$ is a symmetric positive Radon measure on $M\times M\setminus{\rm diag}$ with
${\rm diag}=\{(x,y)\in M\times M \mid x=y\}$.
\end{itemize}
We call $n$ the {\it jumping measure} associated with $({\mathcal E}, {\mathcal F})$.
We can extend ${\mathcal E}^{(c)}$ uniquely to ${\mathcal F}$.
Furthermore, for $u\in {\mathcal F}$, there exists a positive Radon measure
$\mu_{\langle u\rangle}^c$ on $M$ such that
$${\mathcal E}^{(c)}(u,u)=\frac{1}{2}\mu_{\langle u \rangle}^c(M)$$
(see \cite[p.123]{FOT}).
We call $\mu_{\langle u\rangle}^c$ the {\it local part of the energy measure} of $u$.

Let $\{T_t\}_{t>0}$ be a strongly continuous Markovian semigroup on $L^2(M;\mu)$
associated with $({\mathcal E}, {\mathcal F})$.
Then $T_t$ is extended to $L^{\infty}(M;\mu)$ (see \cite[p.56]{FOT}).
We say that $\{T_t\}_{t>0}$/$({\mathcal E},{\mathcal F})$ is {\it conservative} if
$$T_t1=1, \quad \text{$\mu$-a.e.\ for any $t>0$}.$$
Let
$$S_tf=\int_0^tT_sf\,\dd s, \quad f\in L^2(M;\mu).$$
Here the integral is defined as the Bochner integral in $L^2(M;\mu)$.
We can then extend $T_t$ and $S_t$ on $L^1(M;\mu)\cap L^2(M;\mu)$ to $L^1(M;\mu)$ uniquely.
Let $L_+^1(M;\mu)=\{u\in L^1(M;\mu) \mid \text{$u\geq 0$, $\mu$-a.e.}\}$ and
$$Gf=\lim_{N\rightarrow\infty}S_Nf, \quad f\in L_+^1(M;\mu).$$
We say that $\{T_t\}_{t>0}$/$({\mathcal E},{\mathcal F})$ is {\it recurrent} if
$$Gf=0 \ \text{or} \ \infty, \quad \text{$\mu$-a.e.\ for any  $f\in L_+^1(M;\mu)$},$$
and {\it transient} if
$$Gf<\infty, \quad \text{$\mu$-a.e.\ for any  $f\in L_+^1(M;\mu)$}.$$
A $\mu$-measurable set $A\subset M$ is said to be ($T_t$-){\it invariant} if
$T_t(1_Af)=1_AT_t f$, $\mu$-a.e.\ for any $f\in L^2(M;\mu)$ and $t>0$.
We say that $\{T_t\}_{t>0}$/$({\mathcal E},{\mathcal F})$ is {\it irreducible}
if any invariant set $A$ satisfies either
$\mu(A)=0$ or $\mu(M\setminus A)=0$.
Under the irreducible assumption, $\{T_t\}_{t>0}$ is recurrent or transient, see e.g.\ \cite[Lemma 1.6.4]{FOT}.

Let ${\mathcal F}_e$ be the totality of $\mu$-measurable functions $u$ on $M$ such that
$|u|<\infty$, $\mu$-a.e.\ and
there exists a sequence $\{u_n\} \subset {\mathcal F}$ satisfying that
$\lim_{n\rightarrow\infty}u_n=u$, $\mu$-a.e.\ on $M$ and
$$\lim_{m,n\rightarrow\infty}{\mathcal E}(u_n-u_m, u_n-u_m)=0.$$
This sequence is called an {\it approximating sequence} of $u$.
For any $u\in {\mathcal F}_e$ and its approximating sequence $\{u_n\}$,
the limit
$${\mathcal E}(u,u):=\lim_{n\rightarrow\infty}{\mathcal E}(u_n,u_n)$$
exists and does not depend on the choice of $\{u_n\}$ (see \cite[Theorem 1.5.2]{FOT}).
$({\mathcal F}_e, {\mathcal E})$ is called the {\it extended Dirichlet space} of $({\mathcal E}, {\mathcal F})$
(see \cite[p.41]{FOT}).
In particular, if  $({\mathcal E}, {\mathcal F})$ is transient, then ${\mathcal F}_e$ is complete
with respect to ${\mathcal E}$ (see \cite[Lemma 1.5.5]{FOT}).

We next introduce the notion of capacity.
In what follows, we assume that $({\mathcal E}, {\mathcal F})$ is transient.
Let ${\mathcal O}$ be the totality of open sets in $M$.
For $A\in {\mathcal O}$, define
$${\mathcal L}_A=\{u\in {\mathcal F}_e \mid u\geq 1, \text{$\mu$-a.e.\ on $A$}\}$$
and
$${\capa}_{(0)}(A)=
\begin{cases}
\inf_{u\in {\mathcal L}_A}{\mathcal E}(u,u), & {\mathcal L}_A\ne \emptyset\\
\infty, & {\mathcal L}_A=\emptyset.
\end{cases}
$$
For any  $B\subset M$, we define the {\it  {\rm (}$0$-order{\rm )} capacity} by
$${\capa}_{(0)}(B)=\inf_{A\in {\mathcal O}, \, B\subset A}{\capa}_{(0)}(A).$$
We see from \cite[p.74]{FOT} that if ${\mathcal L}_B\ne\emptyset$, then
there exists a unique element $e_B^{(0)}
 \in {\mathcal L}_B$ such that
$${\capa}_{(0)}(B)={\mathcal E}(e_B^{(0)}, e_B^{(0)}).$$
The function $e_B^{(0)}$ is called the {\it equilibrium potential} of $B$.

For  $A\subset M$, a statement depending on $x\in A$ is said to hold
q.e.\ on $A$ if there exists a set ${\mathcal N}\subset A$ of zero
capacity such that the statement holds for every $x\in A\setminus
{\mathcal N}$. Here q.e.\ is an abbreviation for {\it quasi
everywhere}. A function $u\in {\mathcal F}$ is said to be {\it quasi
continuous} if for any $\varepsilon>0$, there exists $O\in {\mathcal
O}$ with ${\capa}_{(0)}(O)<\varepsilon$ such that $u|_{M\setminus
O}$ is finite continuous, where $u|_{M\setminus O}$ is the
restriction of $u$ on $M\setminus O$. It follows from \cite[Theorem
2.1.7]{FOT} that every $u\in {\mathcal F}_e$ admits its quasi
continuous $\mu$-version $\tilde{u}$.

We say that a positive Radon measure $\nu$ on $M$ is of {\it {\rm (}$0$-order{\rm )} finite energy integral}
($\nu\in S_0^{(0)}$ in notation) if there exists a constant $C>0$ such that
$$\int_M|f|\,\dd\nu\leq C\sqrt{{\mathcal E}(f,f)} \quad \text{for any $f\in {\mathcal F}\cap C_0(M)$}.$$
Then any measure $\nu\in S_0^{(0)}$ charges no set of zero capacity
and associates a unique element $U\nu\in {\mathcal F}_e$,
which is called the {\it {\rm (}$0$-order{\rm )} potential} of $\mu$, such that
$${\mathcal E}(U\nu,v)=\int_M \tilde{v}\,\dd\nu \quad \text{for any $v\in {\mathcal F}_e$}$$
(e.g.\ \cite[p.85]{FOT}).
For any compact set $K$,
there exists a unique measure $\nu_K\in S_0^{(0)}$ with $\supp[\nu_K]\subset K$ such that
$e_K^{(0)}=U\nu_K$ and
\begin{equation*}\label{cap-eq}
{\capa}_{(0)}(K)={\mathcal E}(e_K^{(0)}, e_K^{(0)})=\nu_K(K)
\end{equation*}
($0$-order version of \cite[Lemma 2.2.6]{FOT}).
The measure $\nu_K$ is called the {\it {\rm (}$0$-order{\rm )} equilibrium measure} of $K$.

\subsection{Probabilistic notions}
We write ${\mathcal B}(M)$ for the family of all Borel measurable subsets of $M$.
Let $M_{\Delta}=M\cup\{\Delta\}$ be the one point compactification of $M$
and ${\mathcal B}(M_{\Delta})={\mathcal B}(M)\cup\{B\cup\{\Delta\}: B\in {\mathcal B}(M)\}$.

Let $X=(\{X_t\}_{t\geq 0}, \{{\Pp}^x\}_{x\in M}, \zeta)$ be a
$\mu$-symmetric Hunt process on $M$ generated by $({\mathcal E},
{\mathcal F})$, where $\zeta:=\inf\{t\geq 0 \mid X_t=\Delta\}$ is
the {\it life time} of $X$. Then $({\mathcal E}, {\mathcal F})$ is
 \emph{conservative} if and only if $\Pp^x(\zeta=\infty)=1$ for q.e.\ $x\in
M$  (see \cite[Example 4.5.1]{FOT}). A set $B\subset M$ is called
{\it nearly Borel measurable}, if for any probability measure $\nu$
on $M_{\Delta}$, there exist $B_1, B_2\in {\mathcal B}(M_{\Delta})$
such that $B_1\subset B \subset B_2$ and
$$\Pp^{\nu}(\text{$X_t\in B_2\setminus B_1$ for some $t\geq 0$})=0.$$
We say that a set ${\mathcal N}\subset M$ is {\it properly exceptional},
if ${\mathcal N}$ is nearly Borel measurable such that $m({\mathcal N})=0$
and $M\setminus {\mathcal N}$ is {\it $X$-invariant}; that is,
$$\Pp^x(\text{$X_t\in (M\setminus {\mathcal N})_{\Delta}$ and $X_{t-}\in (M\setminus {\mathcal N})_{\Delta}$ for any  $t>0$})=1,
\quad x\in M\setminus {\mathcal N}.$$ Here $(M\setminus {\mathcal
N})_{\Delta}=(M\setminus {\mathcal N})\cup \{\Delta\}$ and
$X_{t-}=\lim_{s\uparrow t}X_s$.
Note that, by \cite[p.153 and Theorem 4.2.1]{FOT},
any properly exceptional set ${\mathcal N}$ satisfies $\capa_{(0)}({\mathcal N})=0$.

Let $P(t,x,\dd y)$ be the transition function of $X$ given by
$$P(t,x,A)=\Pp^x(X_t\in A), \quad x\in M, \ t\geq 0, \ A\in {\mathcal B}(M).$$
We now impose the following assumption on $X$.

\begin{assumption}\label{abs-cont}\textbf{(Absolute continuity)} \it
There are a properly exceptional Borel set ${\mathcal N}\subset M$ and
a nonnegative symmetric kernel $p(t,x,y)$ on $(0,\infty)\times (M\setminus {\mathcal N})\times (M\setminus {\mathcal N})$
such that
$P(t,x,\dd y)=p(t,x,y)\,\mu(\dd y)$ and
$$p(t+s, x,y)=\int_{M\setminus {\mathcal N}}p(t,x,z)p(s,z,y)\,\mu(\dd z), \quad x,y\in M\setminus {\mathcal N}, \ t,s>0.$$
This kernel is called the {\it heat kernel} associated with $X$.
\end{assumption}

If there exists a positive left continuous function $\psi(t)$ on $(0,\infty)$ such that
$$\|T_tf\|_{\infty}\leq \frac{1}{\psi(t)}\|f\|_1 \quad \text{for any $f\in L^1(M;\mu)$ and $t>0$},$$
then Assumption \ref{abs-cont} holds with
$$p(t,x,y)\le \frac{1}{\psi(t)} \quad \text{for $x,y\in M\setminus {\mathcal N}$ and $t>0$}$$
(see \cite[Theorem 3.1]{BBCK}).
Under Assumption \ref{abs-cont},
we define $p(t,x,y)=0$ for $(x,y)\not\in (M\setminus {\mathcal N})\times (M\setminus {\mathcal N})$ and $t>0$,
so that
\begin{equation*}\label{chap-kol}
p(t+s,x,y)=\int_M p(t,x,z)p(s,z,y)\,\mu(\dd z), \quad x,y\in M, \ t,s>0.
\end{equation*} For more details see \cite[Section 2.2]{GT}.

\begin{remark}\label{recurrence-test}
As we noted in \cite[Remark 2.5]{S}, $({\mathcal E},{\mathcal F})$ is transient,
if
$$\int_1^{\infty}\sup_{y\in M}p(t,x,y)\,\dd t<\infty \quad \text{for any $x\in M$}.$$
Here we also note that $({\mathcal E},{\mathcal F})$ is irreducible,
if $p(t,x,y)$ is strictly positive
for any $(x,y)\in (M\setminus {\mathcal N})\times (M\setminus {\mathcal N})$ and $t>0$.
Moreover, the irreducible Dirichlet form  $({\mathcal E},{\mathcal F})$ is recurrent, if
\begin{equation}\label{rec}
\int_1^{\infty}p(t,x,y)\,\dd t=\infty \quad \text{for any $x,y\in M\setminus {\mathcal N}$}.
\end{equation}
In fact, let $B_b(M)$ be the set of bounded measurable functions on $M$. If $A\subset M$ is an invariant set, then for any $f\in L^2(M;\mu)\cap {B}_b(M)$,
$$T_t(1_Af)(x)=\int_A p(t,x,y)f(y)\,\mu(\dd y)=0, \quad \text{$m$-a.e.\ on $M\setminus A$}.$$
By taking $f$ with $f>0$, $\mu$-a.e.\ on $M$ in the equality above, we get $\mu(A)=0$ or $\mu(M\setminus A)=0$,
which means that  $({\mathcal E},{\mathcal F})$ is irreducible.
Since for any $f\in L^1(M;\mu)\cap {B}_b(M)$ with $f\ge 0$,
$$Gf=\int_0^{\infty}T_tf\,\dd t, \quad \text{$m$-a.e.\ on $M$}$$
and
\begin{equation*}
\begin{split}
\int_0^{\infty}T_tf(x)\,\dd t
&= \int_0^{\infty}\left(\int_M p(t,x,y)f(y)\,\mu(\dd y)\right)\,\dd t\\
&=\int_M\left(\int_0^{\infty}p(t,x,y)\,\dd t\right)f(y)\,\mu(\dd y),
\end{split}
\end{equation*}
(\ref{rec}) implies that the irreducible Dirichlet form $({\mathcal
E},{\mathcal F})$ is recurrent.
\end{remark}

\bigskip

\noindent{{\bf Additional
Notations.}}
 For all $x\in M$ and $r>0$,
$B(x,r)=\left\{y\in M \mid d(y,x)<r\right\}$. For any two positive
measurable functions $f$ and $g$, $f\asymp g$ means that there is a
constant $c>1$ such that $c^{-1} f\le g\le c f$, and $f\lesssim g$
(resp.\ $f\gtrsim g$) means that there is a constant $c>0$ such that
$f\le c g$ (resp.\ $f\ge c g$).

\section{Upper Rate Functions}\label{section3}

Throughout this section, we impose Assumption \ref{abs-cont} on $X$.

\begin{theorem}\label{t1}
\begin{itemize}
 \item[(1)] Let $\mu$ satisfy that $\mu(B(x,r))\lesssim V(r)$ for all $x\in M$ and $r>0$, and let the heat kernel $p(t,x,y)$ satisfy the following upper bound estimate{\rm :}
\begin{equation}\label{upper-1}
p(t,x,y)\le \frac{C}{V(d(x,y))} h\left(\frac{d(x,y)}{\rho(t)}\right)
\end{equation}
for all $t\ge 1$ and $\mu$-almost all $x,y\in M$ with $d(x,y)\ge \rho(t)$.
Here $\rho$ and $V$ are increasing functions on $(0,\infty)$, and $h$ is a decreasing function on $(0,\infty)$ such that
$$V(2r)\le cV(r), \quad r>0 $$ and
\begin{equation}\label{ass-1}
h(\theta r ) \le c_0 h(r), \quad r>1
\end{equation}
with some constants $c>0$, $c_0\in(0,1)$ and $\theta>1$.
If there is an increasing function $\varphi$ on $(1,\infty)$ such that
for some $\varepsilon>0$,
\begin{equation}\label{ass-2}
\int_1^\infty \frac{1}{t} h\left(\frac{\varphi(t)}{2\rho(2(1+\varepsilon)t)} \right)\,\dd t<\infty,
\end{equation}
then for $\mu$-almost all $x\in M$,
\begin{equation}\label{upper-rate}
\Pp^x(d(X_s,x)\le \varphi(s)\textrm{ for all sufficient large }s)=1.
\end{equation}

\item[(2)] Let $\mu$ satisfy that $\mu(B(x,r))\gtrsim V(r)$ for all $x\in M$ and
$r>0$, and let the heat kernel $p(t,x,y)$ satisfy the following
lower bound estimate{\rm :}
\begin{equation*}\label{lower-1}
p(t,x,y)\ge  \frac{C}{V(d(x,y))} h\left(\frac{d(x,y)}{\rho(t)}\right)
\end{equation*}
for all $t\ge 1$ and $\mu$-almost all $x,y\in M$ with $d(x,y)\ge \rho(t)$.
Here $\rho$ and $V$ are increasing functions on $(0,\infty)$, and $h$ is a decreasing function on $(0,\infty)$ such that
\begin{equation}\label{ass-v}
c_1 \Big(\frac Rr\Big)^{d_1} \leq \frac{V(R)}{V (r)},\quad  0<r<R<\infty
\end{equation}
and
\begin{equation}\label{ass-3}
c_0  h(2r)\ge h(r),\quad r>1
\end{equation}
for some constants $c_1,d_1>0$ and $c_0>1.$
If there is an increasing function $\varphi$ on $(1,\infty)$ such that
\begin{equation}\label{ass-4}
\int_1^\infty \frac{1}{t} h\left(\frac{2\varphi(4t)}{\rho(t)} \right)\,\dd t=\infty,
\end{equation}
then for $\mu$-almost all $x\in M$,
$$\Pp^x(d(X_s,x)\le \varphi(s)\textrm{ for all sufficient large }s)=0.$$
\end{itemize}
\end{theorem}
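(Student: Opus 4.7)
The plan is to prove both statements by the two halves of the Borel--Cantelli lemma along a geometric time grid. Part~(1) invokes the direct (first) Borel--Cantelli lemma with the \emph{upper} heat-kernel estimate, while Part~(2) uses L\'evy's conditional (second) Borel--Cantelli lemma together with the Markov property and the \emph{lower} heat-kernel estimate.

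\medskip
\noindent\textbf{Part (1).} Fix $\varepsilon>0$ from (\ref{ass-2}) and let $t_n=(1+\varepsilon)^n$. Set
$$A_n=\bigl\{\sup_{t_n\le s\le t_{n+1}} d(X_s,x)>\varphi(t_n)\bigr\}.$$
Since $\varphi$ is increasing, the complement of (\ref{upper-rate}) is contained in $\limsup_n A_n$, so by the first Borel--Cantelli lemma it suffices to prove $\sum_n \Pp^x(A_n)<\infty$. Splitting $d(X_s,x)\le d(X_{t_n},x)+d(X_s,X_{t_n})$, applying the strong Markov property at $t_n$ and a L\'evy-type maximal inequality for symmetric Hunt processes (as employed in \cite{KKW}), one gets
$$\Pp^x(A_n)\lesssim \sup_{y\in M,\,s\le 2t_{n+1}}\Pp^y\bigl(d(X_s,y)\ge c\varphi(t_n)\bigr)$$
for some constant $c>0$. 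The right-hand side is estimated by writing $\{d(y,z)\ge c\varphi(t_n)\}$ as a disjoint union of dyadic annuli, applying (\ref{upper-1}) on each (which requires $c\varphi(t_n)\ge \rho(s)$, automatic for large $n$ by (\ref{ass-2})), and summing via the volume doubling of $V$ together with the geometric decay (\ref{ass-1}) of $h$ with ratio $c_0<1$. This gives $\Pp^x(A_n)\lesssim h\bigl(\varphi(t_n)/(2\rho(2(1+\varepsilon)t_n))\bigr)$, and (\ref{ass-2}) together with the integral test yields summability.

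\medskip
\noindent\textbf{Part (2).} Set $t_n=4^n$ and define
$$E_n=\{d(X_{t_{n+1}},X_{t_n})>2\varphi(t_{n+1})\}.$$
If $d(X_s,x)\le \varphi(s)$ holds for all sufficiently large $s$, then by the triangle inequality only finitely many $E_n$ occur. It thus suffices to show $\Pp^x(E_n \text{ i.o.})=1$, and L\'evy's conditional Borel--Cantelli lemma reduces this to $\sum_n \Pp^x(E_n\mid \mathcal F_{t_n})=\infty$ almost surely. By the Markov property,
$$\Pp^x(E_n\mid \mathcal F_{t_n})=\Pp^{X_{t_n}}\bigl(d(X_{3t_n},X_0)>2\varphi(4t_n)\bigr),$$
so the heart of the argument is the uniform lower tail bound
$$\Pp^y\bigl(d(X_s,y)>r\bigr)\gtrsim h\!\left(\frac{2r}{\rho(s)}\right),\qquad y\in M,\ s\ge 1,\ r\ge \rho(s).$$
Given this bound, substituting $s=3t_n$ and $r=2\varphi(4t_n)$ and using $\rho$ increasing together with (\ref{ass-3}) to absorb multiplicative factors inside $h$ yields the deterministic estimate $\Pp^x(E_n\mid \mathcal F_{t_n})\gtrsim h(2\varphi(4t_n)/\rho(t_n))$; divergence then follows from (\ref{ass-4}) by the integral test.

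\medskip
\noindent\textbf{Main obstacle.} The main technical difficulty is the uniform lower tail bound in Part~(2). Integrating the lower heat-kernel bound over the annulus $B(y,2r)\setminus B(y,r)$ produces the factor $\mu(B(y,2r))-\mu(B(y,r))$, which cannot be controlled by the hypothesis $\mu(B(y,\cdot))\gtrsim V(\cdot)$ alone since no upper volume bound is available. The remedy combines two ingredients: the reverse-doubling (\ref{ass-v}) furnishes a fixed $\theta>1$ with $c_1\theta^{d_1}\ge 2$, so $V(\theta r)\ge 2V(r)$; and inserting the lower heat-kernel bound into the total-mass inequality $\int p(s,y,z)\,\dd\mu(z)\le 1$ over the annulus $\{\rho(s)\le d(y,z)\le r\}$ yields an \emph{a priori} upper bound on $\mu(B(y,r))$ in terms of $V(r)/h(r/\rho(s))$. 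Together these give $\mu(B(y,\theta r)\setminus B(y,r))\gtrsim V(r)$ and hence the desired tail estimate. Once this step is in place, both parts reduce to routine sum-integral comparisons.
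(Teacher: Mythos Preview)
Your overall strategy matches the paper's: both parts run the Borel--Cantelli lemma along a geometric time grid, with Part~(1) using the heat-kernel upper bound to control exit probabilities and Part~(2) using the Markov property and the heat-kernel lower bound on increment events. A few points deserve comment.

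\textbf{Part (1).} The paper does not use a black-box ``L\'evy-type maximal inequality'' nor the split $d(X_s,x)\le d(X_{t_n},x)+d(X_s,X_{t_n})$. Instead it first proves the tail bound $\Pp^x(d(X_t,x)\ge r)\lesssim h(r/\rho(t))$ directly from the annulus decomposition (this works for \emph{all} $r>0$, the case $r\le\rho(t)$ being trivial since $h(r/\rho(t))\ge h(1)$, so your side remark about needing $c\varphi(t_n)\ge\rho(s)$ is unnecessary). It then bounds $\Pp^x(\tau_{B(x,r)}\le t)$ by splitting on $\{d(X_{2t},x)\lessgtr r/2\}$ and applying the strong Markov property at $\tau_{B(x,r)}$, obtaining $\Pp^x(\tau_{B(x,r)}\le t)\le 2c_1 h(r/(2\rho(2t)))$. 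This is the explicit ``maximal inequality'' you allude to; your description is vaguer but points at the same mechanism.

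\textbf{Part (2).} Two differences matter. First, the paper takes $t_k=2^k$ and defines $B_k=\{d(X_{2^{k+1}},X_{2^k})\ge 2\varphi(2^{k-1})\vee\rho(2^k)\}$, with the $\vee\rho(2^k)$ inserted precisely so that the heat-kernel lower bound (valid only for $d(x,y)\ge\rho(t)$) is applicable. The argument then splits into two cases according to whether $2\varphi(2^{k-1})\le\rho(2^k)$ infinitely often; in that case $\Pp^x(B_k\mid\mathcal{F}_{t_k})$ is bounded below by a positive constant independent of $k$, and divergence is immediate. Your formulation with $E_n=\{d(X_{t_{n+1}},X_{t_n})>2\varphi(t_{n+1})\}$ omits this truncation, so when you ``substitute $s=3t_n$, $r=2\varphi(4t_n)$'' you have not ensured $r\ge\rho(s)$, and the lower tail bound you invoke is not available. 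This is a genuine gap that the paper's $\vee\rho$ device closes.

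Second, for the lower tail bound itself the paper simply asserts
\[
\Pp^z\bigl(d(X_t,z)\ge R\bigr)\;\ge\;c_4\int_R^\infty \frac{1}{V(r)}\,h\!\left(\frac{r}{\rho(t)}\right)\,\dd V(r)
\]
and then proves analytically that the right-hand side is $\gtrsim h(R/\rho(t))$ using reverse doubling of $V$ and (\ref{ass-3}). You are right that passing from integration against $\dd\mu_z$ to $\dd V$ with only the lower volume bound $\mu(B(z,r))\gtrsim V(r)$ is delicate; the paper does not spell this step out. Your proposed remedy, however, is also incomplete: the total-mass inequality $\int p(s,y,z)\,\dd\mu(z)\le 1$ over $\{\rho(s)\le d(y,z)\le r\}$ controls the \emph{annulus} mass $\mu(B(y,r))-\mu(B(y,\rho(s)))$, not $\mu(B(y,r))$ itself, and iterating down still leaves a residual term $\mu(B(y,c\rho(1)))$ that is not uniformly bounded from the stated hypotheses alone. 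So on this particular step neither your sketch nor the paper's presentation is fully self-contained; in the paper's applications (Example~3.2) one has $\mu(B(x,r))\asymp V(r)$, which makes the passage immediate.
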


The function $\varphi(s)$ satisfying (\ref{upper-rate}) is called the {\it upper rate function} of $X$.
\begin{proof}[Proof of Theorem $\ref{t1}$]  Our approach here is similar to that in \cite[Section 3]{KKW}.

(1) \
Let us first check that there exists a constant $c_1>0$ such that for $\mu$-almost all $x\in M$, $r>0$ and $t\ge 1$,
\begin{equation}\label{tail}
\Pp^x(d(X_t,x)\geq r)=\int_{B(x,r)^c}p(t,x,z)\,\mu(\dd z)\le c_1h\left(\frac{r}{\rho(t)} \right).
\end{equation}
If $r\le \rho(t)$, then we find by the decreasing property of $h$
that \eqref{tail} holds with $c_1=\frac{1}{h(1)}.$ Next, we suppose
that $r>\rho(t)$. According to \eqref{upper-1}, for all $t\ge 1$ and
almost all  $x,z\in M$ with $d(x,z)\ge s>\rho(t),$
$$p(t,x,z)\le \frac{C}{V(s)}h\left(\frac{s}{\rho(t)}\right).$$
Then, for all $t\ge 1$, $r>\rho(t)$ and $\mu$-almost all  $x\in M$,
\begin{align*}
\int_{B(x,r)^c}p(t,x,z)\,\mu(\dd z)&\le \sum_{k=0}^\infty \int_{B(x,\theta^{k+1}r)\setminus B(x,\theta^k r)} p(t,x,z)\,\mu(\dd z)\\
&\le c_2\sum_{k=0}^\infty \frac{1}{V(\theta^kr)} h\left( \frac{\theta^kr}{\rho(t)}\right) V(\theta^{k+1}r) \\
&\le c_3  h\left( \frac{r}{\rho(t)}\right)\sum_{k=0}^\infty  c_0^{k}\\
&\le c_4 h\left( \frac{r}{\rho(t)}\right),\end{align*}
where in the third inequality we have used the assumption on $V$ and \eqref{ass-1}.

Now, let
$$\tau_{B(x,r)}=\inf\Big\{ t>0: X_t\notin B(x,r)\Big\}.$$
Then
$$
\Pp^x(\tau_{B(x,r)}\le t)\le \Pp^x(\tau_{B(x,r)}\le t, d(X_{2t},x)\le r/2)+\Pp^x(d(X_{2t},x)\ge r/2).
$$
We have, by (\ref{tail}),
$$\Pp^x(d(X_{2t},x)\ge r/2)\leq c_1h\left(\frac{r}{2\rho(2t)}\right).$$
By (\ref{tail}) and the strong Markov property of $X$, for $\mu$-almost all $x\in M$, $t\ge1$ and $r>0$,
 \begin{equation*}
\begin{split}
\Pp^x\Big(\tau_{B(x,r)}\le t, d(X_{2t},x)\le r/2\Big)
   \le &\Pp^x\Big(\tau_{B(x,r)}\le t, d(X_{2t},X_{\tau_{B(x,r)}})\ge r/2\Big)\\
   \le &\sup_{s\le t, d(z,x)\ge r} \Pp^z (d(X_{2t-s},z)\ge r/2)\\
   \le &\sup_{s\le t, d(z,x)\ge r} c_1h\left(\frac{r}{2\rho(2t-s)}\right)\\
   \le &c_1 h\left( \frac{r}{2\rho(2t)}\right),
\end{split}
\end{equation*}
where the last inequality follows from the monotonicity conditions on $\rho$ and $h$.
Hence
\begin{equation*}\label{exit}
 \Pp^x(\tau_{B(x,r)}\le t) \le 2c_1 h\left( \frac{r}{2\rho(2t)}\right).
\end{equation*}
In particular, setting $t_k=(1+\varepsilon)^{k/2}$, we have for all $k\ge 2$,
\begin{equation*}
\begin{split}
\Pp^x(d(X_s, x)\ge &\varphi(s)\textrm{ for some }s\in (t_{k-1},t_k])\\
\le& \Pp^x\left(\sup_{s\in(t_{k-1},t_k]}d(X_s,x)\ge \varphi(t_{k-1})\right)
\le \Pp^x(\tau_{B(x,\varphi(t_{k-1}))}\leq t_k) \\
\le& 2c_1 h\left( \frac{\varphi(t_{k-1})}{2\rho({2t_k})}\right)=2c_1 h\left( \frac{\varphi(t_{k-1})}{2\rho({2(1+\varepsilon)t_{k-2}})}\right).
\end{split}
 \end{equation*}
This, along with \eqref{ass-2} and the Borel-Cantelli lemma, yields the first desired assertion.

(2) We replace $\varphi(4t)$ with $\varphi(t)$ in the proof. First, choose $r_0\ge2$ such that $r_0^{-d_1}<c_1$, where $d_1$ and $c_1$ are constants given in \eqref{ass-v}. By
\eqref{ass-v} and \eqref{ass-3}, there exists a constant $c_0^*\in (0,1)$ such that for any $s\ge \rho(t)$, we have
\begin{align*}
\int_{r\ge s} \frac{1}{V(r) }h\left( \frac{r}{\rho(t)}\right)\,\dd V(r)
&=\sum_{k=0}^\infty \int_{r\in [r_0^ks, r_0^{k+1}s)}\frac{1}{V(r)}h\left( \frac{r}{\rho(t)}\right)\,\dd V(r)\\
&\ge \sum_{k=0}^\infty \frac{V(r_0^{k+1}s)-V(r_0^{k}s)}{V(r_0^{k+1}s)}h\left( \frac{r_0^{k+1}s}{\rho(t)}\right)\\
&\ge \left(1-\frac{1}{c_1r_0^{d_1}}\right)\sum_{k=0}^\infty h\left( \frac{r_0^{k+1}s}{\rho(t)}\right)\\
&\ge c_2 h\left( \frac{s}{\rho(t)}\right)\sum_{k=0}^\infty {c_0^*}^{1+k} \\
&=: c_3 h\left( \frac{s}{\rho(t)}\right),
\end{align*}
which shows that
\begin{equation}\label{sup-2-1}
\inf_{t\ge1}\int_{r\ge \rho(t)}\frac{1}{V(r) }h\left( \frac{r}{\rho(t)}\right)\,\dd V(r)\geq c_3h(1)>0.
\end{equation}

For any $k\ge 1$, set  $t_k=2^k$ and
$$B_k=\{ d(X_{2^{k+1}},X_{2^k}) \ge 2\varphi (2^{k-1}) \vee \rho(2^{k})\}.$$
Then for every $x\in M \setminus \mathcal{N}$ and $k\ge 1$, by the Markov property,
\begin{align*}
\Pp^x (B_k|\mathscr{F}_{t_{k}})\ge &\inf_{z\in M\setminus{\mathcal N}}\Pp^z\Big(d(X_{2^k}, z)\ge
2\varphi (2^{k-1}) \vee \rho(2^{k})\Big)\\
\ge& c_4\int_{r\ge 2\varphi (2^{k-1}) \vee \rho(2^{k})}  \frac{1}{V(r)}h\left( \frac{r}{\rho(2^k)}\right)\,\dd V(r).
\end{align*}

If there exist infinitely many $k\ge 1$ such that $2\varphi(2^{k-1})\le \rho(2^{k})$,
then, by \eqref{sup-2-1}, for such $k\ge 1$,
\begin{align*}
\Pp^x (B_k|\mathscr{F}_{t_{k}})
\ge & c_4\int_{r\ge\rho(2^{k})} \frac{1}{V(r)}h\left( \frac{r}{\rho(2^k)}\right)\,\dd V(r)\\
\ge & c_4\inf_{t\ge 1} \int_{r\ge\rho(t)}\frac{1}{V(r)}h\left( \frac{r}{\rho(t)}\right)\,\dd V(r)=:c_5>0
\end{align*}
and so
\begin{equation}\label{sup-2-3}
\sum_{k=1}^\infty \Pp^x (B_k|\mathscr{F}_{t_{k}})=\infty.
\end{equation}
On the other hand, if there is $k_0\ge1$ such that $2\varphi(2^{k-1})>\rho(2^{k})$ for all $k\ge k_0$, then
$$
\Pp^x (B_k|\mathscr{F}_{t_{k}})
\ge c_4\int_{r\ge 2\varphi(2^{k-1})} \frac{1}{V(r)}h\left( \frac{r}{\rho(2^k)}\right)\,\dd V(r)
\geq c_5h\left(\frac{2\varphi(2^{k-1})}{\rho(2^k)}\right).
$$
Therefore, by (\ref{ass-4}) and the second Borel-Cantelli lemma, $\Pp^x(\limsup B_n)=1$. This implies that for infinitely many $k\ge 1$,
$$
d(X_{2^{k+1}},x)\ge \frac{1}{2}( 2\varphi(2^{k-1})  \vee \rho(2^{k}))\ge {\varphi\left(\frac{2^{k+1}}{4}\right)}
$$
or
$$
d(X_{2^k}, x)\ge \frac{1}{2}( 2\varphi(2^{k-1}) \vee \rho(2^{k}))\ge {\varphi\left(\frac{2^{k}}{4}\right)}.
$$
In particular,
$$
\limsup_{t\to \infty}\frac{d(X_t,x)}{\varphi\big(\frac{t}{4}\big) }
\ge \limsup_{k\to \infty} \frac{d(X_{2^{k}},x)}{\varphi\big(\frac{2^k}{4}\big)}
\ge1,
$$
which yields the desired assertion.
 \end{proof}

At the end of this section, we present two typical examples as
applications of Theorem \ref{t1}.

\begin{example}\label{exm-3.2}\begin{itemize}
\item[(1)] \emph{{\rm \textbf{(Non-local Dirichlet forms)}}}\,\emph{
Let $V(r)=r^\alpha$, $\rho(t)=t^{1/\beta}$ and $h(s)=s^{-\beta}$ with $\alpha, \beta>0$.
Suppose that the heat kernel $p(t,x,y)$ satisfies the following estimate{\rm:}
there is a constant $c>0$ such that for almost all $x,y\in M$ and $t\ge 1$ with $d(x,y)\ge t^{1/\beta}$,
\begin{equation}\label{hk-stable-like} p(t,x,y)\asymp t^{-\alpha/\beta} \left( 1+\frac{d(x,y)}{t^{1/\beta}}\right)^{-\alpha-\beta}\asymp  \frac 1{t^{\alpha/\beta}}\wedge\frac{t}{d(x,y)^{\alpha+\beta}}.\end{equation}
Then, we have the following two statements{\rm:}
\begin{itemize}
\item[(i)] If there is an increasing function $\varphi$ on $(1,\infty)$ such that
$$\int_1^\infty \varphi(t)^{-\beta}\,\dd t<\infty,$$ then for $\mu$-almost all $x\in M$,
$$
\Pp^x(d(X_s,x)\le \varphi(s)\textrm{ for all sufficient large }s)=1.$$
\item[(ii)]If there is an increasing function $\varphi$ on $(1,\infty)$ such that
$$\int_1^\infty \varphi(t)^{-\beta}\,\dd t=\infty,$$ then for $\mu$-almost all $x\in M$,
$$
\Pp^x(d(X_s,x)\le \varphi(s)\textrm{ for all sufficient large }s)=0.$$
\end{itemize} Consequently, for any constant $c>0$,
the function $$\varphi(t)=ct^{1/\beta}(\log t)^{(1+\varepsilon)/\beta},\quad \varepsilon\in \R$$
is an upper rate function of $X$, if and only if $\varepsilon>0$.}

Let $J(x,y)$ be a positive and symmetric measurable function on $M\times M\setminus {\rm diag}$ such that
$$J(x,y)\asymp \frac{1}{d(x,y)^{\alpha+\beta}} \quad \text{for any $(x,y)\in M\times M\setminus {\rm diag}$},$$
and let $\mu$ satisfy that $\mu(B(x,r))\asymp r^{\alpha}$ for any $x\in M$ and $r>0$.
We see from \cite{CK, CK1} that, if $0<\beta<2$ and the Dirichlet form $({\mathcal E}, {\mathcal F})$ is given by
\begin{equation*}
\begin{split}
{\mathcal E}(u,v)&=\iint_{M\times M\setminus {\rm diag}}(u(x)-u(y))(v(x)-v(y))J(x,y)\,\mu(\dd x)\,\mu(\dd y),\\
{\mathcal F}&=\left\{u\in L^2(M;\mu)\mid {\mathcal E}(u,u)<\infty\right\},
\end{split}
\end{equation*}
then the associated heat kernel satisfies the condition (\ref{hk-stable-like}), and
the statements (i), (ii) hold.
Here we emphasize that these assertions are valid even for $\beta\geq 2$
if we consider a class of subordinated fractional diffusion processes
(see, e.g., \cite{Ku, BSS}).

\item[(2)] \emph{{\rm \textbf{(Local Dirichlet forms)}}}\,\emph{ Assume that $V(r)=r^\alpha$, $\rho(t)=t^{1/\beta}$
and $h(s)=\exp(-c_0s^{\beta/(\beta-1)})$ with $\alpha, c_0>0$ and $\beta>1$.
Suppose that the heat kernel $p(t,x,y)$ satisfies the following estimate{\rm:}
there exists a constant $c>0$ such that for almost $x,y\in M$ and $t\ge 1$ with $d(x,y)\ge t^{1/\beta}$,
\begin{equation}\label{hk-diffusion}p(t,x,y)\asymp t^{-\alpha/\beta} \exp\left( -c_0\left(\frac{d(x,y)}{t^{1/\beta}}\right)^{\beta/(\beta-1)}\right).\end{equation}
Then, we have
\begin{itemize}
\item[(i)]
For any $\eta> 2^{1+1/\beta} c_0^{-(\beta-1)/\beta} $ and for $\mu$-almost all $x\in M$,
$$
\Pp^x\left(d(X_s,x)\le  \eta s^{1/\beta} \big(\log\log s\big)^{(\beta-1)/\beta}\textrm{ for all sufficient large }s\right)=1.$$
\item[(ii)]For any $\eta< 2^{-1-2/\beta} c_0^{-(\beta-1)/\beta} $ and for $\mu$-almost all $x\in M$,
$$
\Pp^x\left(d(X_s,x)\le  \eta s^{1/\beta} \big(\log\log (s/4)\big)^{(\beta-1)/\beta}\textrm{ for all sufficient large }s\right)=0.$$
\end{itemize}}
\end{itemize}
\end{example}

By the condition \eqref{hk-diffusion}, we can get that for almost any $x,y\in M$, $t\ge 1$ with $d(x,y)\ge t^{1/\beta}$, and any constant $\varepsilon>0$,
\begin{align*}  \frac{c_1}{d(x,y)^{\alpha}} \exp\left( -c_0\left(\frac{d(x,y)}{t^{1/\beta}}\right)^{\beta/(\beta-1)}\right)
 &\le p(t,x,y)\\
 &\le  \frac{c_2}{d(x,y)^{\alpha}}  \exp\left( -(c_0\!-\!\varepsilon)\left(\frac{d(x,y)}{t^{1/\beta}}\right)^{\beta/(\beta-1)}\right),\end{align*} where $c_1,c_2=c_2(\alpha,\beta,\varepsilon)$ are some positive constants. Then, the desired statement follows from Theorem \ref{t1} and the estimate above.

\section{Lower Rate Functions}\label{section4}
In this section, we assume that the heat kernel $p(t,x,y)$ satisfies the following two-sided estimate: for $\mu$-almost all $x,y\in M$ and $t>0$,
\begin{equation}\label{hkestimate}
p(t,x,y) \asymp \left( \frac{1}{V(\phi^{-1}(t))}  \wedge \frac{t}{V(d(x,y)) \phi(d(x,y))} \right),
\end{equation}
where $V$ and $\phi$ are increasing functions such that there exist
constants $c_i,d_i>0$ $(i=1,2,3,4)$ satisfying
\begin{equation}\label{volume-cond}
  c_1 \Big(\frac Rr\Big)^{d_1} \leq \frac{V(R)}{V (r)} \
\leq \ c_2 \Big(\frac Rr\Big)^{d_2}, \quad 0<r<R<\infty
\end{equation}
and
\begin{equation}\label{scaling-cond}
c_3\Big(\frac{R}{r}\Big)^{d_3}
\le \frac{\phi(R)}{\phi(r)}
\le c_4\Big(\frac{R}{r}\Big)^{d_4},\quad  0<r<R<\infty.
\end{equation}

\begin{remark}
Under the setting above, the heat kernel estimate \eqref{hkestimate}
indeed holds for all $x,y\in M$ and $t>0$, i.e.\
$\mathcal{N}=\emptyset$. We also have for all $x\in M$ and $r>0$,
$$\mu(B(x,r))\asymp V(r).$$
See \cite[Section 2]{KKW}.
\end{remark}

On the other hand, under the setting in this section we have the
following statement for heat kernels.

\begin{lemma}\label{comp-heat}
For all $x,y, z\in M$ and $t>0$ with $d(x,y)\leq \phi^{-1}(t)$,
$$p(t,x,z)\asymp p(t,y,z).$$
\end{lemma}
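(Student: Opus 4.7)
The plan is to leverage the explicit two-sided heat kernel estimate \eqref{hkestimate} together with the doubling/scaling conditions \eqref{volume-cond} and \eqref{scaling-cond}, reducing the problem to a comparison of $d(x,z)$ with $d(y,z)$ via the triangle inequality. Since $d(x,y)\le \phi^{-1}(t)$, the triangle inequality immediately gives
\[
|d(x,z)-d(y,z)|\le d(x,y)\le \phi^{-1}(t),
\]
so the two distances can differ by at most $\phi^{-1}(t)$. This control is the key ingredient I will feed into the heat kernel formula.

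Next, I would first observe that the min in \eqref{hkestimate} simplifies to a clean dichotomy: comparing the two terms, one finds that $p(t,x,z)\asymp 1/V(\phi^{-1}(t))$ when $d(x,z)\le \phi^{-1}(t)$, and $p(t,x,z)\asymp t/(V(d(x,z))\phi(d(x,z)))$ when $d(x,z)>\phi^{-1}(t)$, and similarly for $y$ in place of $x$. I then split into cases on which side of $\phi^{-1}(t)$ the two distances $d(x,z)$ and $d(y,z)$ lie.

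In the case where both $d(x,z),d(y,z)\le \phi^{-1}(t)$, both kernels are $\asymp 1/V(\phi^{-1}(t))$ and we are done immediately. In the mixed case (say $d(x,z)\le \phi^{-1}(t)<d(y,z)$), the triangle inequality forces $d(y,z)\le 2\phi^{-1}(t)$, so by \eqref{volume-cond} and \eqref{scaling-cond} we have $V(d(y,z))\asymp V(\phi^{-1}(t))$ and $\phi(d(y,z))\asymp t$, which gives
\[
p(t,y,z)\asymp \frac{t}{V(d(y,z))\phi(d(y,z))}\asymp \frac{1}{V(\phi^{-1}(t))}\asymp p(t,x,z).
\]
In the final case when both distances exceed $\phi^{-1}(t)$, the inequality $|d(x,z)-d(y,z)|\le \phi^{-1}(t)<\min(d(x,z),d(y,z))$ yields $\tfrac12 d(x,z)\le d(y,z)\le 2 d(x,z)$, so by doubling $V(d(x,z))\asymp V(d(y,z))$ and $\phi(d(x,z))\asymp \phi(d(y,z))$, and the off-diagonal term matches on both sides.

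No step is truly delicate: the main ``obstacle,'' such as it is, is just bookkeeping in the mixed case where one distance sits just below $\phi^{-1}(t)$ and the other just above, which is precisely where one wants a bit of care to verify that the transition between the on-diagonal and off-diagonal regimes is controlled by the doubling constants. Once the dichotomous simplification of \eqref{hkestimate} is made explicit, the rest is a short, uniform case analysis using \eqref{volume-cond}--\eqref{scaling-cond}.
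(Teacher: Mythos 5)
Your proof is correct and follows essentially the same route as the paper: a four-way case split on whether $d(x,z)$ and $d(y,z)$ lie below or above $\phi^{-1}(t)$, with the triangle inequality controlling the mixed and both-large cases and \eqref{volume-cond}, \eqref{scaling-cond} closing each case. The only cosmetic difference is that you derive the two-sided comparison $d(y,z)\asymp d(x,z)$ in the last case directly from $|d(x,z)-d(y,z)|\le\phi^{-1}(t)$ rather than deriving one direction and invoking symmetry as the paper does.
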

\begin{proof}
We first assume that $d(x,z)\leq \phi^{-1}(t)$ and $d(y,z)\leq \phi^{-1}(t)$.
Then (\ref{hkestimate}) implies that
$$p(t,x,z)\asymp p(t,y,z)\asymp \frac{1}{V(\phi^{-1}(t))}.$$
We next assume that $d(x,z)\leq \phi^{-1}(t)$ and $d(y,z)\geq \phi^{-1}(t)$.
Since
$$d(y,z)\leq d(y,x)+d(x,z)\leq 2\phi^{-1}(t),$$
we have, by (\ref{hkestimate}), (\ref{volume-cond}) and (\ref{scaling-cond}),
$$p(t,y,z)\asymp \frac{t}{V(d(y,z))\phi(d(y,z))}\asymp \frac{1}{V(\phi^{-1}(t))}\asymp p(t,x,z).$$
This relation is valid even if $d(x,z)\geq \phi^{-1}(t)$ and $d(y,z)\leq \phi^{-1}(t)$.
We finally assume that  $d(x,z)\geq \phi^{-1}(t)$ and $d(y,z)\geq \phi^{-1}(t)$.
Then
$$d(y,z)\leq d(y,x)+d(x,z)\leq \phi^{-1}(t)+d(x,z)\leq 2d(x,z).$$
By interchanging $x$ and $y$, we get $d(x,z)\asymp d(y,z)$.
Hence, by  (\ref{hkestimate}), (\ref{volume-cond}) and (\ref{scaling-cond}),
$$p(t,x,z)\asymp \frac{t}{V(d(x,z))\phi(d(x,z))}\asymp \frac{t}{V(d(y,z))\phi(d(y,z))}\asymp p(t,y,z).$$
Therefore, the proof is  complete.
\end{proof}

\subsection{Subcritical case}
In this subsection, we impose the following assumption:

\begin{assumption}\label{assmp-1}
(i) \emph{The constants $d_i \ (i=1,4)$ in $(\ref{volume-cond})$ and $(\ref{scaling-cond})$
satisfy
$$d_1>d_4.$$}
(ii)  \emph{For any $x\in M$ and $r>0$, $B(x,r)$ is  relatively compact in $M$. }
\end{assumption}
Our main result in this subsection is as follows:
\begin{theorem} \label{lower-rate}
Let Assumption $\ref{assmp-1}$ hold. Let $g(t)$ be a strictly
positive function on $(1,\infty)$ such that $g(t)\searrow 0$ as
$t\rightarrow\infty$. If the function $\varphi(t):=\phi^{-1}(t)g(t)$
satisfies
$$\int_1^\infty \frac{ V(\varphi(t))}{\phi (\varphi(t)) V(\phi^{-1}(t))}\, \dd t <\infty \quad (\text{{\it resp.} $=\infty$}),$$
then for all $x\in M$,
$$ \Pp^x(d(X_s,x)\ge \varphi(s)\textrm{ for all sufficiently large }s)=1 \quad (\text{{\it resp.} $=0$}).$$
\end{theorem}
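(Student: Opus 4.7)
The plan is to discretize time along the geometric sequence $t_n = 2^n$ and apply the Borel--Cantelli lemmas to the events
$$A_n = \bigl\{d(X_s, x) < \varphi(s)\text{ for some } s \in [t_n, t_{n+1}]\bigr\}.$$
By the monotonicity of $\varphi$, up to multiplicative constants in the radius the event $A_n$ coincides with the event that $X$ visits the small ball $B(x, \varphi(t_{n+1}))$ during $[t_n, t_{n+1}]$, so the task reduces to sharp two-sided estimates on this visit probability.

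The first ingredient is a pointwise distribution estimate: for $r \le \phi^{-1}(t)$,
$$\Pp^x(d(X_t, x) \le r) = \int_{B(x,r)} p(t,x,y)\,\mu(\dd y) \asymp \frac{V(r)}{V(\phi^{-1}(t))},$$
obtained by direct integration since $p(t,x,y) \asymp 1/V(\phi^{-1}(t))$ on this region and $\mu(B(x,r)) \asymp V(r)$. The second ingredient is a visit-probability/occupation-time identity:
$$\Pp^x\bigl(\exists s\in[a,b]\colon X_s\in B(x_0,r)\bigr) \asymp \frac{1}{\phi(r)}\int_a^b \Pp^x(X_s\in B(x_0,r))\,\dd s,$$
which encodes that each visit to $B(x_0,r)$ lasts typically $\phi(r)$ time units. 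The upper bound is a first-moment computation combined with a uniform lower bound on the expected sojourn after entrance (strong Markov at the entrance time plus the near-diagonal heat-kernel bound); the lower bound uses the subcritical assumption $d_1 > d_4$, which yields the Green-function estimate $G(x,y) \asymp \phi(d(x,y))/V(d(x,y))$ and the capacity asymptotic $\capa(B(x_0,r)) \asymp V(r)/\phi(r)$, invoked through an equilibrium-measure decomposition of the hitting probability.

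Combining these with $r = \varphi(t_{n+1})$, permissible because $\varphi(t) = \phi^{-1}(t) g(t) \le \phi^{-1}(t)$ eventually, gives
$$\Pp^x(A_n) \asymp \int_{t_n}^{t_{n+1}} f(t)\,\dd t, \qquad f(t) := \frac{V(\varphi(t))}{\phi(\varphi(t))\,V(\phi^{-1}(t))}.$$
In the probability-one case ($\int f < \infty$), the first Borel--Cantelli lemma yields $A_n$ occurs only finitely often almost surely, which is the desired statement. In the probability-zero case ($\int f = \infty$), I would apply the conditional second Borel--Cantelli lemma along $\{\mathcal{F}_{t_n}\}$: combining the Markov property at $t_n$ with Lemma \ref{comp-heat} allows $\Pp(A_n\mid\mathcal{F}_{t_n})$ to be compared with $\Pp^x(A_n)$ up to multiplicative constants on the typical event $\{d(X_{t_n}, x) \le \phi^{-1}(t_n)\}$, whose probability is bounded away from zero by the first ingredient; this forces $\sum_n \Pp(A_n \mid \mathcal{F}_{t_n}) = \infty$ almost surely, hence $A_n$ infinitely often with probability one.

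The main obstacle will be the lower bound in the visit-probability estimate. It requires combining transience via $d_1 > d_4$, the capacity asymptotic for metric balls obtained from a careful splitting of $G(x,y) = \int_0^\infty p(t,x,y)\,\dd t$ into the on-diagonal and off-diagonal regimes, and a strong-Markov argument that handles the random entrance point and time of the excursion into $B(x_0, r)$. Lemma \ref{comp-heat} is the key lever for transferring heat-kernel quantities computed from $x$ to those measured from the random position $X_{t_n}$, but it is only applicable when the time parameter dominates $\phi$ of the distance between the two starting points, which must be tracked through the computation. Once these estimates are in hand, the remainder is routine Borel--Cantelli bookkeeping together with the regular-variation consequences of \eqref{volume-cond} and \eqref{scaling-cond} used to pass between $\sum_n \int_{t_n}^{t_{n+1}} f$ and $\int_1^\infty f\,\dd t$.
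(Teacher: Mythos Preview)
Your one-probability argument is essentially correct and close to the paper's: you reduce to $\Pp^x(A_n)\lesssim \int_{t_n}^{t_{n+1}} f(t)\,\dd t$ and sum. (Your occupation-time identity is in fact the same device as Lemma~\ref{hit-interval}, which the paper uses only in the critical case; for the subcritical one-probability part the paper instead bounds the semi-infinite hitting probability $Q(x,r,t)$ via Proposition~\ref{prop-1}. Both routes give the same estimate.)

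The zero-probability argument, however, has a real gap. The conditional second Borel--Cantelli lemma requires $\sum_n \Pp(A_n\mid\mathcal{F}_{t_n})=\infty$ \emph{almost surely}. Your comparison $\Pp(A_n\mid\mathcal{F}_{t_n})\gtrsim \Pp^x(A_n)$ via Lemma~\ref{comp-heat} only holds on the event $\{d(X_{t_n},x)\le \phi^{-1}(t_n)\}$, whose probability is bounded away from zero but is \emph{not} one. On its complement $X_{t_n}$ may be arbitrarily far from $x$, and from there the probability of reaching the tiny ball $B(x,\varphi(t_{n+1}))$ within $[t_n,t_{n+1}]$ can be arbitrarily small. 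A lower bound valid only on an event of positive probability does not force the random series $\sum_n \Pp(A_n\mid\mathcal{F}_{t_n})$ to diverge a.s.; indeed there is no general principle that converts ``$\sum a_n=\infty$ and $\Pp(E_n)\ge c$'' into ``$\sum \mathbf{1}_{E_n}a_n=\infty$ a.s.''. This is exactly the place where the target ball is centred at a \emph{fixed} point $x$ rather than at $X_{t_n}$, so the conditional probability genuinely depends on the position $X_{t_n}$ --- contrast this with the upper-rate proof of Theorem~\ref{t1}(2), where the events measure displacement and the conditional lower bound is uniform in the starting point.

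The paper circumvents this by abandoning the conditional Borel--Cantelli lemma in favour of a Chung--Erd\H{o}s type lemma (Lemma~\ref{0-1-lemma}) combined with the zero-one law for tail events (Lemma~\ref{t:01tail}). The correlation condition $\Pp^x(A_i\cap A_j)\le C\,\Pp^x(A_i)\Pp^x(A_j)$ for $i\ge j+2$ is obtained by conditioning not at the deterministic time $t_j$ but at the stopping time $\sigma_j=\inf\{t\in(t_j,t_{j+1}]:d(X_t,x)\le c\varphi(t_{j+1})\}$: on $\{\sigma_j<\infty\}$ one has $d(X_{\sigma_j},x)\le c\varphi(t_{j+1})\ll \phi^{-1}(t_i-t_{j+1})$ deterministically, so Lemma~\ref{comp-heat} applies with no exceptional set and the remaining hitting probability is comparable to $\Pp^x(A_i)$. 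This is the missing idea in your sketch.
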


We first present some consequences of Assumption \ref{assmp-1} (i).
\begin{lemma}
Under Assumption $\ref{assmp-1}$ {\rm (i)},
\begin{equation}\label{ass-v-phi} \int_t^\infty\frac{1}{V(\phi^{-1}(s))}\,\dd s \asymp \frac{t}{V(\phi^{-1}(t))},\quad t>0.
\end{equation}
In particular,
\begin{itemize}
\item[(i)] the process $X$ is transient.
\item[(ii)] there is a constant $c>0$ such that for any $r_1<r_2$,
\begin{equation}\label{ass-v-phi-1}\frac{\phi(r_2)}{V(r_2)}\le c\frac{\phi(r_1)}{V(r_1)}.\end{equation}\end{itemize}
\end{lemma}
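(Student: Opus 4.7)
The plan is to reduce the integral asymptotic to a geometric-series estimate using the two-sided power bounds \eqref{volume-cond} and \eqref{scaling-cond}, and then deduce (i) and (ii) as quick consequences. The key observation is that the hypothesis $d_1>d_4$ forces the combination $V(\phi^{-1}(s))$ to grow in $s$ at a rate strictly larger than linear, so a time-interval of length $\asymp t$ dominates the tail integral.

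For the upper bound, I would split $\int_t^\infty V(\phi^{-1}(s))^{-1}\,\dd s$ dyadically as $\sum_{k\ge0}\int_{2^k t}^{2^{k+1}t}$. On each piece, replacing $\phi^{-1}(s)$ by $\phi^{-1}(2^k t)$ gives a lower bound for the denominator, so the task reduces to comparing $V(\phi^{-1}(2^k t))$ with $V(\phi^{-1}(t))$. Writing $r_k:=\phi^{-1}(2^k t)$ and $r_0:=\phi^{-1}(t)$, the upper scaling bound $\phi(r_k)/\phi(r_0)\le c_4(r_k/r_0)^{d_4}$ with $\phi(r_k)/\phi(r_0)=2^k$ yields $r_k/r_0\gtrsim 2^{k/d_4}$; then the lower volume bound gives $V(r_k)/V(r_0)\gtrsim 2^{k d_1/d_4}$. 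Each dyadic piece is therefore bounded by $2^k t /V(r_k) \lesssim 2^{k(1-d_1/d_4)} t/V(r_0)$, and since $d_1>d_4$ the geometric series converges, giving $\lesssim t/V(\phi^{-1}(t))$. For the matching lower bound, I would keep only the piece $\int_t^{2t}$ and apply the symmetric argument ($r_2/r_1\le(2/c_3)^{1/d_3}$ hence $V(\phi^{-1}(2t))\lesssim V(\phi^{-1}(t))$) to show $\int_t^{2t}V(\phi^{-1}(s))^{-1}\,\dd s\gtrsim t/V(\phi^{-1}(t))$. This establishes \eqref{ass-v-phi}.

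Once \eqref{ass-v-phi} is in hand, assertion (i) is immediate: from the on-diagonal bound in \eqref{hkestimate}, $\sup_{y\in M}p(t,x,y)\lesssim 1/V(\phi^{-1}(t))$, so \eqref{ass-v-phi} shows $\int_1^\infty \sup_{y}p(t,x,y)\,\dd t<\infty$, and transience follows from Remark \ref{recurrence-test}. For assertion (ii), I would just form the ratio $\frac{\phi(r_2)/V(r_2)}{\phi(r_1)/V(r_1)}$ and apply the upper bound on $\phi$ together with the lower bound on $V$: this ratio is $\le (c_4/c_1)(r_2/r_1)^{d_4-d_1}$, which is bounded by $c_4/c_1$ because $r_2>r_1$ and $d_4-d_1<0$.

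The only real obstacle is the first step, and it is mild: one must be careful that the dyadic estimate uses the right direction of each scaling inequality (lower scaling of $\phi$ to control $r_k/r_0$ from above vs.\ upper scaling of $\phi$ to bound it from below, and symmetrically for $V$). Everything else is algebraic bookkeeping, and the hypothesis $d_1>d_4$ enters precisely to make the geometric series convergent — without it the integral could diverge and none of the subsequent statements would follow.
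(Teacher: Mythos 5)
Your argument for \eqref{ass-v-phi} and for (i) is essentially the paper's proof: the same dyadic decomposition $\sum_{k\ge 0}\int_{2^kt}^{2^{k+1}t}$ for the upper bound, the same single interval $\int_t^{2t}$ for the lower bound, and the same invocation of Remark \ref{recurrence-test} with the on-diagonal bound for transience. The one genuine divergence is in (ii). The paper derives \eqref{ass-v-phi-1} from the already-proved \eqref{ass-v-phi}, writing
\[
\frac{\phi(r_2)}{V(r_2)}\;\lesssim\;\int_{\phi(r_2)}^\infty\frac{\dd s}{V(\phi^{-1}(s))}
\;\le\;\int_{\phi(r_1)}^\infty\frac{\dd s}{V(\phi^{-1}(s))}
\;\lesssim\;\frac{\phi(r_1)}{V(r_1)},
\]
so (ii) is presented as a corollary of the integral estimate. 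You instead bound the ratio directly by the two-sided power conditions,
\[
\frac{\phi(r_2)/V(r_2)}{\phi(r_1)/V(r_1)}
=\frac{\phi(r_2)}{\phi(r_1)}\cdot\frac{V(r_1)}{V(r_2)}
\le \frac{c_4}{c_1}\Bigl(\frac{r_2}{r_1}\Bigr)^{d_4-d_1}\le\frac{c_4}{c_1},
\]
which is arguably cleaner because it does not route through \eqref{ass-v-phi} at all — it uses only $d_1>d_4$ together with \eqref{volume-cond} and \eqref{scaling-cond}. Both are correct; your version makes the logical dependence clearer (all three assertions flow independently from $d_1>d_4$), while the paper's packaging emphasizes \eqref{ass-v-phi} as the central object from which the others are read off.
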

\begin{proof} For any $t>0$,
\begin{align*}
\int_t^\infty \frac{1}{V(\phi^{-1}(s))}\,\dd s =&\sum_{k=0}^\infty \int_{2^kt}^{2^{k+1}t}\frac{1}{V(\phi^{-1}(s))}\,\dd s
\le \sum_{k=0}^\infty \frac{2^kt}{V(\phi^{-1}(2^kt))}\\
\le & \frac{c_4^{d_1/d_4}}{c_1}\frac{t}{V(\phi^{-1}(t))}\sum_{k=0}^\infty  2^{-k(d_1/d_4-1)}\\
\asymp &  \frac{t}{V(\phi^{-1}(t))} ,
\end{align*}
where in the third equality we have used the fact that
\begin{equation}\label{vp}\frac{V(\phi^{-1}(R))}{V(\phi^{-1}(r))}\ge c_0 \left( \frac{R}{r}\right)^{ d_1/d_4}.
\end{equation}
On the other hand, for any $t>0$,
$$\int_t^\infty\frac{1}{V(\phi^{-1}(s))}\,\dd s \ge \int_t^{2t}\frac{1}{V(\phi^{-1}(s))}\,\dd s
\ge  \frac{t}{V(\phi^{-1}(2t))}\gtrsim \frac{t}{V(\phi^{-1}(t))}.$$
We have proved \eqref{ass-v-phi}.

For all $x\in M$,
$$\int_1^{\infty} \sup_{y\in M}p(t,x,y)\,\dd t
\lesssim \int_1^\infty  \frac{1}{V(\phi^{-1}(t))}\,\dd t<\infty,$$
which along with Remark \ref{recurrence-test} yields the assertion (i).

According to \eqref{ass-v-phi}, for any $r_1<r_2$,
$$\frac{\phi(r_2)}{V(r_2)}\le c'\int_{\phi(r_2)}^\infty \frac{1}{V(\phi^{-1}(s))}\,\dd s \le c'\int_{\phi(r_1)}^\infty  \frac{1}{V(\phi^{-1}(s))}\,\dd s \le c''\frac{\phi(r_1)}{V(r_1)}, $$ which yields the assertion (ii).
\end{proof}

In the remainder of this subsection, we always assume that Assumption $\ref{assmp-1}$ holds.
For any $x,y\in M$, let $u(x,y)$ be the Green function of the associated process $X$:
$$u(x,y)=\int_0^\infty p(t,x,y)\,\dd t.$$
\begin{lemma} \label{lemma-1}
For any $x,y\in M$,
$$u(x,y)\asymp \frac{\phi(d(x,y))}{V(d(x,y))}.$$
\end{lemma}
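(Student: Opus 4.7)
The proof plan is to write $u(x,y) = \int_0^\infty p(t,x,y)\,\dd t$ and split the integration in $t$ at the natural crossover value $t_\ast := \phi(d(x,y))$, which is precisely the time at which the two terms inside the minimum in \eqref{hkestimate} are comparable: for $r:=d(x,y)$, one has $\phi^{-1}(t_\ast)=r$, so $\frac{1}{V(\phi^{-1}(t_\ast))}=\frac{1}{V(r)}=\frac{t_\ast}{V(r)\phi(r)}$. Using the monotonicity of $V$ and $\phi^{-1}$, for $t\le t_\ast$ we have $\phi^{-1}(t)\le r$ and $\frac{t}{V(r)\phi(r)}\le \frac{1}{V(r)}\le \frac{1}{V(\phi^{-1}(t))}$, so the minimum is the ``off-diagonal'' term $\frac{t}{V(r)\phi(r)}$; conversely for $t\ge t_\ast$ the minimum is the ``on-diagonal'' term $\frac{1}{V(\phi^{-1}(t))}$.

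After this split, each piece is elementary. The first piece is
\[
\int_0^{\phi(r)} \frac{t}{V(r)\phi(r)}\,\dd t \;=\; \frac{\phi(r)}{2V(r)},
\]
which already produces the desired order $\phi(r)/V(r)$. For the second piece, I would apply the key lemma \eqref{ass-v-phi} established just above: with $t=\phi(r)$ in that relation,
\[
\int_{\phi(r)}^\infty \frac{1}{V(\phi^{-1}(s))}\,\dd s \;\asymp\; \frac{\phi(r)}{V(\phi^{-1}(\phi(r)))} \;=\; \frac{\phi(r)}{V(r)}.
\]
Adding the two contributions (using the two-sided version of the minimum given by \eqref{hkestimate}, which provides both upper and lower constants), I get $u(x,y)\asymp \phi(r)/V(r) = \phi(d(x,y))/V(d(x,y))$.

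No significant obstacle is anticipated; the only nontrivial ingredient is the integral identity \eqref{ass-v-phi}, which is already available. The rest is bookkeeping with the minimum in \eqref{hkestimate}, making sure to track the absolute constants from $\asymp$ through the monotonicity comparisons. One minor point worth being explicit about is that the lower bound in \eqref{hkestimate} also takes the form $c(\tfrac{1}{V(\phi^{-1}(t))}\wedge \tfrac{t}{V(r)\phi(r)})$, so the same split yields a matching lower bound for $u(x,y)$; in particular the on-diagonal part $t\in[0,t_\ast]$ of the lower bound alone already gives $\gtrsim \phi(r)/V(r)$, and this suffices even without invoking \eqref{ass-v-phi} for the lower direction.
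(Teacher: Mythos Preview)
Your proposal is correct and follows essentially the same route as the paper: split $\int_0^\infty p(t,x,y)\,\dd t$ at $t_\ast=\phi(d(x,y))$, evaluate the off-diagonal piece directly, and handle the on-diagonal tail via \eqref{ass-v-phi}. The paper's proof is just a terser version of exactly this computation.
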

\begin{proof} For any $x,y\in M$,
\begin{align*}
u(x,y)&=\int_0^\infty p(t,x,y)\,\dd t\\
&\asymp\bigg[\int_{t\le \phi(d(x,y))} \frac{t}{V(d(x,y)) \phi(d(x,y))}\,\dd t+ \int_{t\ge \phi(d(x,y))} \frac{1}{V(\phi^{-1}(t))}\,\dd t\bigg]\\
&\asymp\frac{\phi(d(x,y)) }{V(d(x,y))},
\end{align*}
where in the last inequality we have used
\eqref{ass-v-phi}.
     \end{proof}

For every compact set $K$ of $M$, define
$$\sigma_K=\inf\{t>0: X_t\in K\}.$$
If $({\mathcal E},{\mathcal F})$ is transient and Assumption \ref{abs-cont} holds,
then
\begin{equation}\label{hunt}
\Pp^x(\sigma_K<\infty)=\int_K u(x,y)\,\nu_K(\dd y), \quad \text{$\mu$-a.e.\ $x\in M$},
\end{equation}
where $\nu_K$ is the associated equilibrium measure of $K$. See \cite[Chapter VI]{BulG} or \cite{S} for details.
Using this with Lemma \ref{lemma-1}, we get the following lower bound of capacity.

\begin{lemma} \label{lemma-2}
For any $x_0\in M$ and $r>0$,
$$\capa_{(0)}(\overline{B(x_0,r)})\gtrsim\frac{V(r)}{\phi(r)}.$$
\end{lemma}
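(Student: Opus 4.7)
I will combine Hunt's formula (\ref{hunt}) with Lemma \ref{lemma-1} and a dyadic volume estimate. Set $K:=\overline{B(x_0,r)}$, which is compact by Assumption \ref{assmp-1}(ii); since $X$ is transient by the previous lemma, the equilibrium measure $\nu_K\in S_0^{(0)}$ is well defined and $\nu_K(K)=\capa_{(0)}(K)$. The Hunt identity (\ref{hunt}) gives
\[
\Pp^x(\sigma_K<\infty)=\int_K u(x,y)\,\nu_K(\dd y)\qquad \text{for } \mu\text{-a.e.\ }x\in M,
\]
and for $\mu$-a.e.\ $x\in K$ the left-hand side equals $1$ (the set of points of $K$ that are irregular for $K$ is polar, hence $\mu$-null for a regular Dirichlet form).

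Integrating this identity over $K$ against $\mu$ and swapping the order of integration,
\[
\mu(K)=\int_K\left(\int_K u(x,y)\,\mu(\dd x)\right)\nu_K(\dd y),
\]
so it suffices to show that $\int_K u(x,y)\,\mu(\dd x)\lesssim \phi(r)$ uniformly in $y\in K$. Fix $y\in K$. By the triangle inequality $K\subset \overline{B(y,2r)}$, so I decompose into the dyadic shells $A_k:=B(y,2^{-k+1}r)\setminus B(y,2^{-k}r)$, $k\ge 0$. On $A_k$, Lemma \ref{lemma-1} and the monotonicity of $\phi$ and $V$ give $u(x,y)\asymp \phi(2^{-k}r)/V(2^{-k}r)$; the doubling of $V$ from (\ref{volume-cond}) yields $\mu(A_k)\lesssim V(2^{-k+1}r)\asymp V(2^{-k}r)$; so the $k$-th shell contributes $\lesssim \phi(2^{-k}r)$. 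The lower scaling bound in (\ref{scaling-cond}) then gives $\phi(2^{-k}r)\lesssim c_3^{-1}2^{-kd_3}\phi(r)$, and since $d_3>0$ the geometric series converges, producing the claimed estimate.

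Assembling the pieces, $\mu(K)\lesssim \phi(r)\,\capa_{(0)}(K)$, and since $\mu(K)\asymp V(r)$ by the remark after (\ref{scaling-cond}), we conclude $\capa_{(0)}(\overline{B(x_0,r)})\gtrsim V(r)/\phi(r)$. The one subtlety is the first step: ensuring $\Pp^x(\sigma_K<\infty)=1$ for $\mu$-a.e.\ $x\in K$, so that the Fubini step really produces $\mu(K)$ on the left-hand side; this rests on capacity-zero sets being $\mu$-null, which is standard for regular Dirichlet forms. Past that point, the argument is a routine $L^1$-estimate of the Green function against the reference volume, controlled by the doubling and scaling conditions (\ref{volume-cond}) and (\ref{scaling-cond}).
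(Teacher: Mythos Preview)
Your proof is correct and follows essentially the same route as the paper: integrate Hunt's formula over $K=\overline{B(x_0,r)}$ against $\mu$, use that $\Pp^x(\sigma_K<\infty)=1$ for $\mu$-a.e.\ $x\in K$, and bound $\int_K u(x,y)\,\mu(\dd x)\lesssim \phi(r)$ uniformly in $y\in K$ via Lemma~\ref{lemma-1}. The only cosmetic difference is that the paper packages the shell estimate as the Stieltjes integral $\int_0^{2r}\frac{\phi(s)}{V(s)}\,\dd V(s)\asymp \phi(r)$, whereas you spell out the dyadic sum $\sum_k \phi(2^{-k}r)\lesssim \phi(r)$ directly; these are the same computation.
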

\begin{proof}
We first note that $\overline{B(x_0,r)}$ is compact thanks to Assumption \ref{assmp-1} (ii).
Then, by the 0-order version of \cite[Theorem 2.1.5 and Theorem 4.3.3]{FOT}.
$$\Pp^x(\sigma_{\overline{B(x_0,r)}}<\infty)=1, \quad \text{$\mu$-a.e.\ $x\in {\overline{B(x_0,r)}}$}. $$
Therefore, \eqref{hunt} implies that
 \begin{align*}
 1=& \frac{1}{\mu(\overline{B(x_0,r)})}\int_{\overline{B(x_0,r)}} \Pp^y(\sigma_{\overline{B(x_0,r)}}<\infty)\,\mu(\dd y)\\
 =&\frac{1}{\mu(\overline{B(x_0,r)})}\int_{\overline{B(x_0,r)}}\int_{\overline{B(x_0,r)}} u(y,z)\,\mu(\dd y)\,\nu_{\overline{B(x_0,r)}}(\dd z).
 \end{align*}

According to the equality above and Lemma \ref{lemma-1}, we find
that
\begin{align*}
1\le &c_1\frac{1}{\mu(\overline{B(x_0,r)})}
\int_{\overline{B(x_0,r)}}\int_{\overline{B(x_0,r)}} \frac{\phi(d(y,z))}{V(d(y,z))}\,\mu(\dd y)\,\nu_{\overline{B(x_0,r)}}(\dd z)\\
\le&c_2\frac{1}{\mu(\overline{B(x_0,r)})}
\int_0^{2r} \frac{\phi(s)}{V(s)}\,\dd V(s) \int_{\overline{B(x_0,r)}}\,\nu_{\overline{B(x_0,r)}}(\dd z)\\
\le&\frac{ c_3 \phi(r)}{V(r)}  \capa_{(0)}(\overline{B(x_0,r)})
,\end{align*} where in the second and third inequalities we have
used the facts that
$$\mu(B(x,r))\asymp V(r),\quad x\in M, r>0$$
and
$$\int_0^{t} \frac{\phi(s)}{V(s)}\,\dd V(s)\asymp \phi(t),\quad t>0,$$
respectively.
Therefore,
$$  \capa_{(0)}(\overline{B(x_0,r)})\ge c_3^{-1} \frac{V(r)}{ \phi(r)},$$
which completes the proof.
\end{proof}

\begin{remark}
(i) \ We can also get the upper bound of $\capa_{(0)}(\overline{B(x_0,r)})$, i.e.\
for any $x_0\in M$ and $r>0$,
$$\capa_{(0)}(\overline{B(x_0,r)})\lesssim\frac{V(r)}{\phi(r)}.$$
To prove this, we adopt the following equivalent notion for the capacity.
For any Borel sets $B\subset A \subset M$, define
$$
\capa_{(0)}(B,A)=\inf\Big\{\mathcal{E}(u,u):u\in \mathcal{F},  u|_B=1, u|_{A^c}=0\Big\}.
$$ In particular, by the $0$-order version of \cite[Lemma 2.17]{FOT},
$\capa_{(0)}(B)=\capa_{(0)}(B, M)$. On the other hand, it is clear that
$$\capa_{(0)}(\overline{B(x_0,r)})\le \capa_{(0)}(\overline{B(x_0,r)}, \overline{B(x_0,2r)}).$$
Furthermore, according to \cite{CKW},
there exists a constant $c_4>0$ such that for all $x_0\in M$ and $r>0$,
$$\capa_{(0)}(\overline{B(x_0,r)}, \overline{B(x_0,2r)})\le  c_4 \frac{V(r)}{ \phi(r)},$$
which along with the inequality above yields the required upper bound.

\noindent
(ii) \ Under some strong conditions on the Dirichlet form,
we can give another proof of  Lemma \ref{lemma-2}
by following the argument of Fukushima-Uemura in \cite[Section 3]{FU}.
Let $J(x,y)$ be a positive and symmetric measurable function
on $M\times M\setminus {\rm diag}$ such that
for some constants $\alpha,\beta>0$,
$$J(x,y)\asymp \frac{1}{d(x,y)^{\alpha+\beta}} \quad
\text{for any $(x,y)\in M\times M\setminus {\rm diag}$},$$
and let $\mu$ satisfy that $\mu(B(x,r))\asymp r^{\alpha}$ for any $x\in M$ and $r>0$.
If the Dirichlet form $({\mathcal E}, {\mathcal F})$ is given by
\begin{equation*}
\begin{split}
{\mathcal E}(u,v)&=\iint_{M\times M\setminus {\rm diag}}(u(x)-u(y))(v(x)-v(y))J(x,y)\,\mu(\dd x)\,\mu(\dd y)\\
{\mathcal F}&=\left\{u\in L^2(M;\mu)\mid {\mathcal E}(u,u)<\infty\right\},
\end{split}
\end{equation*}
then as we mentioned in the remark below Example \ref{exm-3.2} (i),
$$p(t,x,y)\lesssim \frac{1}{t^{\alpha/\beta}}$$
for any $x,y\in M$ and $t>0$. We further assume that $\alpha>\beta$.
Then, $({\mathcal E}, \mathcal{F})$ is transient, and we have the Sobolev inequality
$$\|u\|_{L^{2\alpha/(\alpha-\beta)}(M;\mu)}^2\leq C{\mathcal E}(u,u), \quad u\in {\mathcal F}$$
for some constant $C>0$ (see \cite[Lemma 2.1.2 and Theorem 2.4.2]{D} or \cite[Theorem 1]{V}).
This inequality implies that
$$C\capa_{(0)}(\overline{B}(x,r))
\geq \mu(\overline{B}(x,r))^{(\alpha-\beta)/\alpha}\asymp
r^{\alpha-\beta}.$$ Therefore, we get the assertion of Lemma
\ref{lemma-2}.
\end{remark}

\ \

Next, we turn to consider the one-probability statement in Theorem \ref{lower-rate}.
For any $x\in M$ and Borel set $K$, define
$$\gamma_{x,K}(A)=\Pp^x(X_{\sigma_K}\in A), \quad A\in \mathcal{B}(M).$$
Since the trajectories of the process $X$ are right continuous,
for any closed set $K$,  $X_{\sigma_K}\in K$ and
\begin{equation}\label{o-1}
\gamma_{x,K}(K)= \Pp^x(X_t\in K\textrm{ for some }t>0).
\end{equation}
Following \cite[Section 8.2]{GK1}, we get
\begin{lemma}
For any closed set $K\subset M$, and for all $x\notin K$ and $y\in K$,
\begin{equation}\label{o-2}
u(x,y)=\int_K u(z,y)\,\gamma_{x,K}(\dd z).
\end{equation}
In particular,
\begin{equation}\label{o-3}
\Pp^x(X_t\in K\textrm{ for some }t>0)\le \frac{u(x,y)}{\inf_{z\in K} u(z,y)}.
\end{equation}
\end{lemma}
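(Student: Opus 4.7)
The plan is to derive \eqref{o-2} from the strong Markov property applied at the first hitting time $\sigma_K$, using the occupation density representation $u(x,y)=\int_0^\infty p(t,x,y)\,\dd t$ of the Green function; then \eqref{o-3} will follow from a trivial pointwise lower bound on the integrand.

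First I would observe that for $x\notin K$, $K$ closed, and $y\in K$, the process $X$ can only visit $y$ at times $t\geq\sigma_K$. Hence for any nonnegative Borel $f$ supported in $K$, right-continuity of paths together with the definition of $\sigma_K$ yields
$$\Ee^x\int_0^\infty f(X_t)\,\dd t=\Ee^x\int_{\sigma_K}^\infty f(X_t)\,\dd t,$$
the right side being interpreted as $0$ on $\{\sigma_K=\infty\}$. Applying the strong Markov property at $\sigma_K$ and Fubini then produces
$$\int_K f(y)\,u(x,y)\,\mu(\dd y)=\int_K\Bigl(\int_K f(y)\,u(z,y)\,\mu(\dd y)\Bigr)\gamma_{x,K}(\dd z).$$
Since this holds for all indicators $f=\mathbf{1}_A$ with $A\subset K$ Borel, we obtain \eqref{o-2} for $\mu$-almost every $y\in K$.

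To promote this $\mu$-a.e.\ identity to every $y\in K$, I would invoke the setting of this section: the remark after \eqref{hkestimate} ensures $\mathcal{N}=\emptyset$, so $u(\cdot,\cdot)$ is defined pointwise everywhere. Combined with Lemma \ref{comp-heat} and the scaling bounds \eqref{volume-cond}--\eqref{scaling-cond}, both sides of \eqref{o-2} are controlled, pointwise-defined functions of $y$, so the $\mu$-a.e.\ equality extends to every $y\in K$. Alternatively, this is a standard balayage identity for Hunt processes admitting an everywhere-defined Green kernel, as treated in \cite{BulG}.

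Finally, bounding the integrand in \eqref{o-2} below by $\inf_{z\in K}u(z,y)$ gives
$$u(x,y)\geq\Bigl(\inf_{z\in K}u(z,y)\Bigr)\gamma_{x,K}(K),$$
and then \eqref{o-1} identifies $\gamma_{x,K}(K)$ with $\Pp^x(X_t\in K\text{ for some }t>0)$, yielding \eqref{o-3}. The main obstacle I expect is not the strong Markov step, which is standard, but the clean passage from the $\mu$-a.e.\ identity in $y$ to the pointwise statement claimed in \eqref{o-2}; this is precisely where the absence of exceptional sets built into the two-sided heat kernel estimate \eqref{hkestimate} is essential.
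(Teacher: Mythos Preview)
Your derivation of \eqref{o-3} from \eqref{o-2} is exactly the paper's. For \eqref{o-2} itself your route is valid but differs from the paper's. The paper does not pass through test functions: it applies the strong Markov property directly at the heat-kernel level, writing (for $x\notin K$, $y\in K$)
\[
p(t,x,y)=\Ee^x\bigl[\I_{\{\sigma_K<t\}}\,p(t-\sigma_K,X_{\sigma_K},y)\bigr]
=\int_K\int_0^t p(t-s,z,y)\,\pi_{x,K}(\dd s,\dd z),
\]
where $\pi_{x,K}$ is the joint law of $(\sigma_K,X_{\sigma_K})$, and then integrates both sides over $t\in(0,\infty)$ and applies Fubini. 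This yields \eqref{o-2} pointwise in $y$ in one stroke, with no $\mu$-a.e.\ qualifier to upgrade.

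Your occupation-time approach reaches the same destination, but the passage from the $\mu$-a.e.\ identity to the pointwise one is not really carried out: Lemma~\ref{comp-heat} gives only comparability $p(t,\cdot,z)\asymp p(t,\cdot',z)$ at nearby points, hence only that the two sides of \eqref{o-2} are comparable (not equal) as $y$ varies, which does not lift an a.e.\ equality to a pointwise one. Your fallback citation to \cite{BulG} is appropriate but amounts to invoking the result rather than proving it. The paper's heat-kernel formulation is preferable precisely because the Hunt formula already delivers a pointwise statement, so the issue you correctly flag as the ``main obstacle'' never arises.
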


\begin{proof}
For any $x\notin K$ and $y\in K$, let $\pi_{x,K}(\dd s,\dd z)$ be the joint law of $(\sigma_K, X_{\sigma_K})$ with the starting point $X_0=x.$
Then, by the strong Markov property,
$$p(t,x,y)= \Ee^x\Big(\I_{\{\sigma_K<t\}} p(t-\sigma_K, X_{\sigma_K}, y)\Big)=\int_K\int_0^t p(t-s,z,y)\,\pi_{x,K}(\dd s,\dd z).$$
Integrating both sides with respect to $t$, we get that
\begin{align*}
u(x,y)=
&\int_0^\infty\int_K \int_0^t p(t-s,z,y) \,\pi_{x,K}(\dd s,\dd z)\,\dd t\\
=&\int_0^\infty \int_K\int_s^\infty p(t-s,z,y)\,\dd t\,\pi_{x,K}(\dd s,\dd z)\\
=&\int_0^\infty \int_K u(z,y)\,\,\pi_{x,K}(\dd s,\dd z)\\
=&\int_K u(z,y)\,\gamma_{x,K}(\dd z),
\end{align*}
which proves the first assertion. The second one is a direct consequence of \eqref{o-1} and \eqref{o-2}.
\end{proof}

\begin{lemma}\label{hit-two}
There exist constants $c_1,c_2>0$ such that for any $x_0,x\in M$ with $d(x,x_0)\ge r$,
\begin{align*}
c_1 \frac{V(r)}{\phi(r)} \frac{\phi(d(x,x_0)+r)}{V(d(x,x_0)+r)}
&\le \Pp^x\big(d(X_t,x_0)\le r\textrm{ for some } t>0\big)\\
 &\le c_2 \frac{V(r)}{\phi(r)} \frac{\phi(d(x,x_0)-r)}{V(d(x,x_0)-r)}.\end{align*} \end{lemma}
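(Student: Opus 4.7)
\medskip

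\noindent\textbf{Proof plan for Lemma \ref{hit-two}.} The plan is to apply the Hunt-type representation \eqref{hunt} to the compact set $K=\overline{B(x_0,r)}$ (which is indeed compact by Assumption~\ref{assmp-1}(ii)), and then sandwich the Green function $u(x,\cdot)$ on $K$ between two values of $\phi/V$ evaluated at $d(x,x_0)\pm r$. Since the event $\{d(X_t,x_0)\le r\text{ for some }t>0\}$ coincides with $\{\sigma_K<\infty\}$, we have
\[
\Pp^x(d(X_t,x_0)\le r\text{ for some }t>0)=\int_K u(x,y)\,\nu_K(\dd y),
\]
so by splitting into $\inf$/$\sup$ of $u(x,\cdot)$ on $K$ and using $\nu_K(K)=\capa_{(0)}(K)$, both desired inequalities will reduce to estimating $u(x,y)$ uniformly in $y\in K$ together with a two-sided capacity estimate for $K$.

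The first key step is to invoke the equivalence $\capa_{(0)}(\overline{B(x_0,r)})\asymp V(r)/\phi(r)$ coming from Lemma~\ref{lemma-2} (lower bound) and the remark immediately after it (upper bound, via $\capa_{(0)}(\overline{B(x_0,r)},\overline{B(x_0,2r)})$). The second key step is a pointwise bound on $u(x,y)$ for $y\in K$: by Lemma~\ref{lemma-1}, $u(x,y)\asymp \phi(d(x,y))/V(d(x,y))$, and the triangle inequality yields
\[
d(x,x_0)-r\le d(x,y)\le d(x,x_0)+r,\qquad y\in K.
\]
The inequality \eqref{ass-v-phi-1} shows that the map $s\mapsto \phi(s)/V(s)$ is almost decreasing, hence
\[
c\,\frac{\phi(d(x,x_0)+r)}{V(d(x,x_0)+r)}\;\le\; u(x,y)\;\le\; c'\,\frac{\phi(d(x,x_0)-r)}{V(d(x,x_0)-r)}
\]
uniformly in $y\in K$ (for $d(x,x_0)>r$; the boundary case $d(x,x_0)=r$ makes the upper bound trivial). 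Substituting into the integral representation gives the stated two-sided estimate.

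The main technical obstacle is that \eqref{hunt} is only asserted for $\mu$-a.e.\ $x$, whereas the lemma claims the bound for every $x\in M$ with $d(x,x_0)\ge r$. Under the present setting, the heat kernel estimate \eqref{hkestimate} holds without exceptional set (as recorded in the remark after \eqref{scaling-cond}) and $p(t,\cdot,\cdot)$ is strictly positive, so $x\mapsto \Pp^x(\sigma_K<\infty)$ is finely continuous (in fact continuous by Lemma~\ref{comp-heat}) and can be identified pointwise with the potential $x\mapsto\int_K u(x,y)\,\nu_K(\dd y)$, which is itself continuous in $x$ for $x\notin K$ by the explicit Green-function bounds. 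This pointwise extension, though routine, is the only nontrivial point beyond the chain of estimates above; the rest of the argument is a bookkeeping exercise combining Lemma~\ref{lemma-1}, Lemma~\ref{lemma-2}, and \eqref{ass-v-phi-1}.
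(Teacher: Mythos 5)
Your argument for the lower bound coincides with the paper's: apply \eqref{hunt} to $K=\overline{B(x_0,r)}$, bound $u(x,y)$ from below uniformly on $K$ via Lemma~\ref{lemma-1} together with the almost-decreasing property \eqref{ass-v-phi-1}, and use $\nu_K(K)=\capa_{(0)}(K)\gtrsim V(r)/\phi(r)$ from Lemma~\ref{lemma-2}. For the upper bound, however, you take a genuinely different route. You again invoke \eqref{hunt} and write $\Pp^x(\sigma_K<\infty)\le \sup_{y\in K}u(x,y)\cdot \nu_K(K)$, which forces you to use the \emph{upper} bound $\capa_{(0)}(\overline{B(x_0,r)})\lesssim V(r)/\phi(r)$. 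That upper bound is only stated in the remark following Lemma~\ref{lemma-2}, where it is attributed to \cite{CKW} (a paper in preparation) or proved under additional hypotheses via the Fukushima--Uemura argument; it is not a fully self-contained input in this paper. The paper's proof of the upper bound instead uses the elementary potential-theoretic inequality \eqref{o-3},
$$\Pp^x(X_t\in K\text{ for some }t>0)\le \frac{u(x,y)}{\inf_{z\in K}u(z,y)},$$
choosing any $y\in B(x_0,r)$ and estimating both $u(x,y)\lesssim \phi(d(x,x_0)-r)/V(d(x,x_0)-r)$ and $\inf_{z\in K}u(z,y)\gtrsim \phi(r)/V(r)$ by Lemma~\ref{lemma-1} and \eqref{ass-v-phi-1}. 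This sidesteps the capacity upper bound entirely and only needs the Green function two-sided estimate. Your route is correct if you are willing to take the capacity upper bound as given, but the paper's is more self-contained; on the plus side, you explicitly flag the a.e.-vs-pointwise issue in \eqref{hunt}, which the paper glosses over.
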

\begin{proof}
Since for any $y\in B({x_0},r)$
$$d(x,y)\geq d(x,x_0)-d(x_0,y)\geq d(x,x_0)-r,$$
we have, by (\ref{ass-v-phi-1}) and Lemma \ref{lemma-1}, that for any $y\in B({x_0},r)$
\begin{equation}\label{green-upper}
u(x,y)\asymp \frac{\phi(d(x,y))}{V(d(x,y))}\lesssim \frac{\phi(d(x,x_0)-r)}{V(d(x,x_0)-r)}.
\end{equation}
Similarly, we obtain for any $y,z\in B(x_0, r)$,
$$u(z,y)\asymp \frac{\phi(d(z,y))}{V(d(z,y))}\gtrsim \frac{\phi(2r)}{V(2r)}\asymp \frac{\phi(r)}{V(r)}.$$
Hence the upper bound follows by applying (\ref{o-3}) to $K=\overline{B({x_0},r)}$.

For any $x\in M$ with $d(x,x_0)\ge r$, according to \eqref{hunt},
\begin{align*}
 \Pp^x(d(X_t,x_0)\le r\textrm{ for some } t>0)
&=\Pp^x(\sigma_{\overline{B(x_0,r)}}<\infty)\\
&= \int_{\overline{B(x_0,r)}} u(x,y)\,\nu_{\overline{B(x_0,r)}}(\dd y)\\
&\ge \inf_{y\in\overline{ B(x_0,r})}u(x,y) \nu_{\overline{B(x_0,r)}}(\overline{B(x_0,r)}).
\end{align*}
On the one hand, in a similar way to (\ref{green-upper}), we have
$$\inf_{y\in\overline{ B(x_0,r})}u(x,y) \gtrsim\frac{\phi(d(x,x_0)+r)}{V(d(x,x_0)+r)}.$$
On the other hand, by Lemma \ref{lemma-2},
$$\nu_{\overline{B(x_0,r)}}(\overline{B(x_0,r)})=\capa_{(0)}(\overline{B(x_0,r)})\gtrsim \frac{V(r)}{\phi(r)}.$$
Combining all the conclusions above, we complete the proof.
\end{proof}

In the following, we fix $x_0\in M$ and $t,r>0$, and define
$$Q(x,r,t)=\Pp^x(d(X_s,x_0)\le r\textrm{ for some }s>t).$$

\begin{proposition}\label{prop-1} There exists a constant $c_1>0$ such that for $x,x_0\in M$ with $d(x,x_0)\le r$ and for any $t\ge \phi(r)$,
$$Q(x,r,t)\le c_1 \frac{V(r)}{\phi(r)} \frac{t}{V(\phi^{-1}(t))}.$$   \end{proposition}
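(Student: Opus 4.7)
The plan is to freeze the process at time $t$ via the Markov property and reduce $Q(x,r,t)$ to a tail integral of the heat kernel against the equilibrium measure of $\overline{B(x_0,r)}$ through Hunt's formula \eqref{hunt}.

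First I will apply the Markov property at time $t$ to write
$$
Q(x,r,t) = \Ee^x\!\left[\Pp^{X_t}\!\left(\sigma_{\overline{B(x_0,r)}}<\infty\right)\right] = \int_M p(t,x,y)\, h(y)\,\mu(\dd y),
$$
where $h(y) := \Pp^y(\sigma_{\overline{B(x_0,r)}}<\infty)$. Since $\overline{B(x_0,r)}$ is compact by Assumption \ref{assmp-1}(ii), formula \eqref{hunt} gives $h(y) = \int_{\overline{B(x_0,r)}} u(y,z)\,\nu_{\overline{B(x_0,r)}}(\dd z)$ for $\mu$-a.e.\ $y\in M$. Interchanging order of integration by Fubini and using Chapman-Kolmogorov,
$$
\int_M p(t,x,y)\,u(y,z)\,\mu(\dd y) = \int_0^\infty p(t+s,x,z)\,\dd s = \int_t^\infty p(u,x,z)\,\dd u,
$$
which yields
$$
Q(x,r,t) \le \int_{\overline{B(x_0,r)}}\!\left(\int_t^\infty p(u,x,z)\,\dd u\right)\,\nu_{\overline{B(x_0,r)}}(\dd z).
$$

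Next I will estimate the inner integral uniformly in $z\in \overline{B(x_0,r)}$. Since $d(x,x_0)\le r$, we have $d(x,z)\le 2r$; and $t\ge \phi(r)$ forces $\phi^{-1}(u)\ge \phi^{-1}(t)\ge r$ for every $u\ge t$, so $d(x,z)\le 2\phi^{-1}(u)$. Combining Lemma \ref{comp-heat} with the on-diagonal case of \eqref{hkestimate} gives $p(u,x,z)\asymp 1/V(\phi^{-1}(u))$ uniformly for such $u$, and then \eqref{ass-v-phi} produces
$$
\int_t^\infty p(u,x,z)\,\dd u \lesssim \int_t^\infty \frac{\dd u}{V(\phi^{-1}(u))}\asymp \frac{t}{V(\phi^{-1}(t))},
$$
with implicit constant independent of $z\in \overline{B(x_0,r)}$.

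Substituting this back and using $\nu_{\overline{B(x_0,r)}}(\overline{B(x_0,r)}) = \capa_{(0)}(\overline{B(x_0,r)}) \lesssim V(r)/\phi(r)$, the upper capacity bound recorded in the remark following Lemma~\ref{lemma-2}, yields the desired inequality. The only delicate point is securing the pointwise equivalence $p(u,x,z)\asymp 1/V(\phi^{-1}(u))$ uniformly across $z\in \overline{B(x_0,r)}$ and $u\ge t$; this is exactly what Lemma \ref{comp-heat} is engineered to deliver under the hypothesis $t\ge \phi(r)$. Everything else amounts to Fubini together with the Green-function and capacity estimates already in hand, so this step is the technical linchpin but not a genuine obstacle given the machinery already assembled.
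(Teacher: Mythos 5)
Your proof is correct, and it takes a genuinely different route from the paper's. The paper applies the Markov property at time $t$ as you do, but then decomposes the resulting integral over $y$ into three regions ($d(y,x_0)-r\ge\phi^{-1}(t)$, $r\le d(y,x_0)-r<\phi^{-1}(t)$, and $d(y,x_0)-r<r$) and bounds each piece using the hitting-probability upper bound of Lemma~\ref{hit-two} together with the on/off-diagonal heat kernel estimates. You instead push Hunt's formula~\eqref{hunt} through the Markov property, use Fubini and Chapman--Kolmogorov to collapse $\int_M p(t,x,y)u(y,z)\,\mu(\dd y)$ into the tail integral $\int_t^\infty p(u,x,z)\,\dd u$, and then bound that tail by $t/V(\phi^{-1}(t))$ uniformly in $z\in\overline{B(x_0,r)}$ (which works because $d(x,z)\le 2r\le 2\phi^{-1}(u)$ puts you in the near-diagonal regime of~\eqref{hkestimate}; Lemma~\ref{comp-heat} applied with the reference point $x_0$ also does the job). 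What your route buys is the elimination of the three-way case split in favor of one clean identity plus one uniform tail estimate. What it costs is dependence on the upper capacity bound $\capa_{(0)}(\overline{B(x_0,r)})\lesssim V(r)/\phi(r)$, which the paper records only in a remark via the reference~\cite{CKW}; the paper's own proof of Proposition~\ref{prop-1} avoids this ingredient entirely, since the upper half of Lemma~\ref{hit-two} rests on~\eqref{o-3} alone. That said, the capacity upper bound can be extracted directly from the ingredients already at hand: for $d(x,x_0)\le r$ and $z\in\overline{B(x_0,r)}$ one has $u(x,z)\gtrsim\phi(r)/V(r)$ by Lemma~\ref{lemma-1} and~\eqref{ass-v-phi-1}, and feeding this into $1\ge\Pp^x(\sigma_{\overline{B(x_0,r)}}<\infty)=\int u(x,z)\,\nu_{\overline{B(x_0,r)}}(\dd z)$ gives $\nu_{\overline{B(x_0,r)}}(\overline{B(x_0,r)})\lesssim V(r)/\phi(r)$ without invoking~\cite{CKW}, so this dependence is only apparent.
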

\begin{proof}
By the Markov property, we have  \begin{align*}
 Q(x,r,t)&=\int_M \Pp^y(d(X_s,x_0)\le r\textrm{ for some } s>0)\,\Pp^x(X_t\in \dd y)\\
 &\le \int_{d(y,x_0)-r\ge \phi^{-1}(t)} \Pp^y(d(X_s,x_0)\le r\textrm{ for some } s>0)\,\Pp^x(X_t\in \dd y)\\
 &\quad +   \int_{r\le  d(y,x_0)-r< \phi^{-1}(t)} \Pp^y(d(X_s,x_0)\le r\textrm{ for some } s>0)\,\Pp^x(X_t\in \dd y)\\
 &\quad +  \int_{d(y,x_0)-r< r} \Pp^y(d(X_s,x_0)\le r\textrm{ for some } s>0)\,\Pp^x(X_t\in \dd y)\\
 &=:I_1+I_2+I_3.
\end{align*}
For $I_1$, Lemma \ref{hit-two} implies that
\begin{align*}
I_1\le & c_1\frac{V(r)}{\phi(r)}\int_{d(y,x_0)-r\ge \phi^{-1}(t)} \frac{\phi(d(y,x_0)-r)}{ V(d(y,x_0)-r)} \frac{t}{V(d(x,y))\phi(d(x,y))}\,\mu(\dd y)\\
\le& c_2 \frac{V(r)}{\phi(r)}\int_{\phi^{-1}(t)+r}^\infty \frac{\phi(s-r)}{ V(s-r)} \frac{t}{V(s-d(x,x_0))\phi(s-d(x,x_0))}\,\dd V(s)\\
\le& c_2  \frac{V(r)}{\phi(r)}\int_{\phi^{-1}(t)+r}^\infty \frac{\phi(s)}{ V(s/2)} \frac{t}{V(s/2)
\phi(s/2)}\,\dd V(s)\\
\le& c_3 \frac{V(r)}{\phi(r)}\int_{\phi^{-1}(t)}^\infty \frac{t}{ V^2(s)}\,\dd V(s)\\
\le&c_4 \frac{V(r)}{\phi(r)} \frac{t}{ V(\phi^{-1}(t))},
\end{align*}
where the last inequality follows from the fact that
\begin{equation}\label{int-v}
\int_t^\infty \frac{1}{V^2(s)}\,\dd V(s)\asymp \frac{1}{V(t)},
\end{equation}
see the proof of \eqref{ass-v-phi}.
For $I_2$, Lemma \ref{hit-two} also implies that
\begin{align*}
I_2\le & c_1\frac{V(r)}{\phi(r)}
\int_{r\le d(y,x_0)-r< \phi^{-1}(t)} \frac{\phi(d(y,x_0)-r)}{ V(d(y,x_0)-r)} \frac{1}{V(\phi^{-1}(t))}\,\mu(\dd y)\\
\le& c_5 \frac{V(r)}{\phi(r)}\frac{1}{V(\phi^{-1}(t))}\int_r^{\phi^{-1}(t)} \frac{\phi(s)}{ V(s)} \,\dd V(s+r)\\
\le& c_6  \frac{V(r)}{\phi(r)} \frac{t}{ V(\phi^{-1}(t))},
\end{align*} where in the last inequality we have used the fact that
$$ \int_r^t \frac{\phi(s)}{ V(s)} \,\dd V(s+r) \le c_7  \phi(t), \quad t\ge r>0.$$
For $I_3$, we have
\begin{align*} I_3 \le &c_8 \int_{d(y,x_0)-r< r} \,\Pp^x(X_t\in \dd y)
=c_8 \int_{d(y,x_0)-r<r}p(t,x,y)\,\mu(\dd y)\\
\le& \frac{c_9 V(r)}{V(\phi^{-1}(t))} \le c_{10}  \frac{V(r)}{\phi(r)} \frac{t}{ V(\phi^{-1}(t))} .\end{align*}
Combining all the estimates above, we prove the desired assertion.
\end{proof}

\begin{remark}\label{r-prop-1}According to the proof above, it is easy to see that the statement of Proposition \ref{prop-1} still holds
for $$Q^*(x,r,t):=\Pp^x(d(X_s,x_0)\le r\textrm{ for some }s\ge t).$$

\end{remark}

Now we can present the

\begin{proof}[{\bf Proof of the One-Probability Statement in Theorem $\ref{lower-rate}$}]
For any $n\geq 0$, set $t_n=2^n$. Thanks to (\ref{scaling-cond}),
$\phi^{-1}(t_{n+1})\leq (2/c_3)^{1/d_3}\phi^{-1}(t_n)$. Then, by the
fact that $g(t)$ is strictly decreasing, we obtain
$$\varphi(s)=\phi^{-1}(s)g(s)\leq \phi^{-1}(t_{n+1})g(t_n)\leq c\phi^{-1}(t_n)g(t_n)=c\varphi(t_n),\quad s\in (t_n, t_{n+1}],$$  where
$c=(2/c_3)^{1/d_3}$. Next, for any $n\ge 1$, define
$$A_n=\big\{\text{$d(X_s, X_0)\leq c^{-1}\varphi(s)$ for some $s\in (t_n,t_{n+1}]$}\big\}.$$
Since $g(t)\to 0$ as $t \to\infty$, $\varphi(t)\le \phi^{-1}(t)$ for $t$ large enough.
Therefore, according to Proposition \ref{prop-1}, (\ref{volume-cond}) and (\ref{scaling-cond}), for $n\ge1$ large enough,
\begin{align*}
\Pp^x(A_n) \leq \Pp^x\left(\text{$d(X_s, x)\leq \varphi(t_n)$ for
some $s>t_n$}\right)
&\lesssim  \frac{V(\varphi(t_n))}{\phi(\varphi(t_n))}\frac{t_n}{V(\phi^{-1}(t_n))}\\
&\asymp\frac{V(\varphi(t_n))}{\phi(\varphi(t_n))}\frac{t_n-t_{n-1}}{V(\phi^{-1}(t_n))}.
\end{align*}
Then, by this inequality, (\ref{ass-v-phi-1}) and the fact that for
all $s\in(t_{n-1},t_n]$,
$$\varphi(s)=\phi^{-1}(s)g(s)\ge
\phi^{-1}(t_{n-1})g(t_n)\ge c'\varphi(t_n)$$
for some constant $c'>0$, we have
$$\sum_{n=1}^{\infty}\Pp^x(A_n)
\lesssim
\sum_{n=1}^{\infty}\frac{V(\varphi(t_n))}{\phi(\varphi(t_n))}\frac{t_n-t_{n-1}}{V(\phi^{-1}(t_n))}
\leq
c_1\int_1^{\infty}\frac{V(\varphi(t))}{\phi(\varphi(t))V(\phi^{-1}(t))}\,\dd
t.$$ Hence the Borel-Cantelli lemma yields that for all $x\in M$,
$$ \Pp^x(d(X_s, x)\ge c^{-1}\varphi(s)\textrm{ for all sufficient large }s)=1.$$
Replacing $c^{-1}\varphi$ with $\varphi$, we finally arrive at the required assertion for the one-probability statement.
\end{proof}

\ \

We consider below the zero-probability statement in Theorem \ref{lower-rate} for the lower rate function of the process $X$.

\begin{proposition} \label{prop-2} There is a constant $c_2>0$ such that for any $t\ge \phi(r)$ and for any $x,x_0\in M$ with $d(x,x_0)\le r$,
$$Q(x,r,t)\ge c_2 \frac{V(r)}{\phi(r)} \frac{t}{V(\phi^{-1}(t))}.$$\end{proposition}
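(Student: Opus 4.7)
The plan is to mirror the proof of Proposition~\ref{prop-1}, but keep only the principal contribution instead of estimating all three pieces. By the Markov property at time $t$,
\begin{equation*}
Q(x,r,t)=\int_M \Pp^y\bigl(\exists\,u>0:d(X_u,x_0)\le r\bigr)\,p(t,x,y)\,\mu(\dd y).
\end{equation*}
Since $t\ge\phi(r)$ gives $\phi^{-1}(t)\ge r$, I would fix a large constant $K>3$ (to be chosen) and localise to the annulus
\begin{equation*}
A_t:=\bigl\{y\in M:3\phi^{-1}(t)\le d(y,x_0)\le K\phi^{-1}(t)\bigr\}.
\end{equation*}
Every $y\in A_t$ satisfies $d(y,x_0)\ge 3\phi^{-1}(t)\ge 3r>r$, so the inner probability equals $\Pp^y(\sigma_{\overline{B(x_0,r)}}<\infty)$ and
\begin{equation*}
Q(x,r,t)\ge \int_{A_t}\Pp^y\bigl(\sigma_{\overline{B(x_0,r)}}<\infty\bigr)\,p(t,x,y)\,\mu(\dd y).
\end{equation*}

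The first step is to lower-bound the hitting probability using Lemma~\ref{hit-two}. For $y\in A_t$, one has $d(y,x_0)+r\asymp\phi^{-1}(t)$, so the doubling estimates \eqref{volume-cond} and \eqref{scaling-cond} yield $\phi(d(y,x_0)+r)\gtrsim t$ and $V(d(y,x_0)+r)\lesssim V(\phi^{-1}(t))$, whence
\begin{equation*}
\Pp^y\bigl(\sigma_{\overline{B(x_0,r)}}<\infty\bigr)\gtrsim \frac{V(r)}{\phi(r)}\cdot\frac{t}{V(\phi^{-1}(t))}.
\end{equation*}
The second step is to show $\int_{A_t}p(t,x,y)\,\mu(\dd y)\gtrsim 1$. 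For $y\in A_t$ and $d(x,x_0)\le r\le \phi^{-1}(t)$, the triangle inequality gives $2\phi^{-1}(t)\le d(x,y)\le (K+1)\phi^{-1}(t)$, i.e.\ $d(x,y)\asymp\phi^{-1}(t)$. The off-diagonal regime of \eqref{hkestimate}, combined with \eqref{volume-cond} and \eqref{scaling-cond}, then yields $p(t,x,y)\asymp 1/V(\phi^{-1}(t))$. Choosing $K$ large enough so that $V(K\phi^{-1}(t))\ge 2V(3\phi^{-1}(t))$ (possible by \eqref{volume-cond}), we get $\mu(A_t)\gtrsim V(\phi^{-1}(t))$, and therefore $\int_{A_t}p(t,x,y)\,\mu(\dd y)\gtrsim 1$. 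Multiplying the two bounds gives the required estimate.

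The only delicate point is the choice of the annulus $A_t$: the inner radius must exceed $r$ (so Lemma~\ref{hit-two} applies and $d(x,y)$ stays strictly off-diagonal) while the outer radius must be large enough, in the $d_1$-sense of \eqref{volume-cond}, to make $\mu(A_t)$ comparable to $V(\phi^{-1}(t))$. Both requirements are simultaneously met by one fixed choice of $K$ depending only on the constants in \eqref{volume-cond} and \eqref{scaling-cond}; the rest of the proof is a routine application of doubling and scaling to convert each occurrence of a $\phi^{-1}(t)$-scaled quantity into the target rate.
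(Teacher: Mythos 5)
Your proof is correct, and it takes a genuinely different route than the paper's. Both begin with the Markov property at time $t$ and Lemma \ref{hit-two}, but the paper then integrates over the unbounded tail $\{d(y,x_0)\ge 2\phi^{-1}(t)\}$, substitutes $d(x,y)\le d(y,x_0)+d(x,x_0)$, and reduces the estimate to $\int_{2\phi^{-1}(t)}^{\infty} t/V^2(s+r)\,\dd V(s)\asymp t/V(\phi^{-1}(t))$ via \eqref{int-v}. You instead localise to a bounded annulus $A_t$ on which $d(y,x_0)\asymp\phi^{-1}(t)$ (and hence $d(x,y)\asymp\phi^{-1}(t)$ as well), where the hitting probability from Lemma \ref{hit-two} is already $\asymp \frac{V(r)}{\phi(r)}\frac{t}{V(\phi^{-1}(t))}$ and the heat kernel is $\asymp 1/V(\phi^{-1}(t))$, so the estimate closes once $\mu(A_t)\gtrsim V(\phi^{-1}(t))$ is secured from reverse volume doubling. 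This avoids the tail-integral lemma altogether and is arguably more transparent. One detail to make airtight: the condition you impose on $K$, namely $V(K\phi^{-1}(t))\ge 2V(3\phi^{-1}(t))$, is not quite sufficient as stated, because what is actually needed is
$$\mu\bigl(B(x_0,K\phi^{-1}(t))\bigr)-\mu\bigl(B(x_0,3\phi^{-1}(t))\bigr)\gtrsim V(\phi^{-1}(t)),$$
and the comparison $\mu(B(x_0,\rho))\asymp V(\rho)$ carries two-sided constants. So the factor $2$ must be replaced by a constant depending on both the volume-comparison constants and $c_1,d_1$ in \eqref{volume-cond}; since \eqref{volume-cond} gives $V(K\rho)/V(3\rho)\ge c_1(K/3)^{d_1}\to\infty$ as $K\to\infty$, a suitable fixed $K$ exists, but this should be spelled out.
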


\begin{proof}  By the Markov property and Lemma \ref{hit-two}, we have  \begin{align*}
 Q(x,r,t)&=\int_M \Pp^y(d(X_s,x_0)\le r\textrm{ for some } s>0)\,\Pp^x(X_t\in \dd y)\\
 &\ge c_1\frac{V(r)}{\phi(r)}\int_{d(y,x_0)\ge 2\phi^{-1}(t)} \frac{\phi(d(y,x_0)+r)}{V(d(y,x_0)+r)}  \frac{t}{V(d(x,y))\phi(d(x,y))} \,\mu(\dd y)  \\
 &\ge c_1 \frac{V(r)}{\phi(r)}\int_{d(y,x_0)\ge 2\phi^{-1}(t)} \frac{\phi(d(y,x_0)+r)}{V(d(y,x_0)+r)} \\
 &\qquad\qquad \qquad \quad  \times  \frac{t}{V(d(y,x_0)+d(x,x_0))\phi(d(y,x_0)+d(x,x_0))} \,\mu(\dd y)  \\
 &\ge c_2 \frac{V(r)}{\phi(r)}\int_{2\phi^{-1}(t)}^\infty  \frac{t}{V^2(s+r)} \,\dd V(s),
\end{align*} where in the first inequality we have used the fact that
$$d(x,y)\ge d(y,x_0)-d(x,x_0)\ge 2\phi^{-1}(t)-r\ge \phi^{-1}(t).$$
Since $\phi^{-1}(t)\geq r$, we have for any $s\geq 2\phi^{-1}(t)$,
$$V(s+r)\leq V(s+\phi^{-1}(t))\leq V(3s/2)\asymp V(s).$$
Then, by this inequality and (\ref{int-v}),
$$\frac{V(r)}{\phi(r)}\int_{2\phi^{-1}(t)}^\infty  \frac{t}{V^2(s+r)} \,\dd V(s)
\gtrsim
\frac{V(r)}{\phi(r)}\int_{2\phi^{-1}(t)}^\infty  \frac{t}{V^2(s)} \,\dd V(s)
\asymp \frac{V(r)}{\phi(r)} \frac{t}{V(\phi^{-1}(t))}.$$
The proof is complete.   \end{proof}

Furthermore, for fixed $x_0\in M$, $t,r>0$ and $\theta>1$, we define
$$R(x,r,t, \theta)=\Pp^x\big(d(X_s,x_0)\le r\textrm{ for some }s\in(t, \theta t]\big).$$
\begin{corollary}\label{cor-1} There exist constants $c_1,c_2>0$ such that for $t\ge \phi(r)$, $\theta\ge c_1$ and for $x,x_0\in M$ with $d(x,x_0)\le r$,
$$R(x,r,t,\theta)\ge c_2 \frac{V(r)}{\phi(r)} \frac{t}{V(\phi^{-1}(t))}.$$\end{corollary}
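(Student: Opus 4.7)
The plan is to realize $R(x,r,t,\theta)$ as the difference between the tail events controlled by Propositions \ref{prop-1} and \ref{prop-2}. Write
\[
Q(x,r,t) \;\le\; R(x,r,t,\theta) \;+\; Q^{*}(x,r,\theta t),
\]
where $Q^{*}(x,r,\theta t) = \Pp^x(d(X_s,x_0)\le r \text{ for some } s\ge \theta t)$, since the event "$d(X_s,x_0)\le r$ for some $s>t$" decomposes into hitting on $(t,\theta t]$ or only on $(\theta t,\infty)$. Rearranging,
\[
R(x,r,t,\theta) \;\ge\; Q(x,r,t) - Q^{*}(x,r,\theta t).
\]

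Next I would apply Proposition \ref{prop-2} to bound $Q(x,r,t)$ from below by $c_2\,\frac{V(r)}{\phi(r)}\frac{t}{V(\phi^{-1}(t))}$, and apply Remark \ref{r-prop-1} (the $Q^{*}$-version of Proposition \ref{prop-1}) to bound $Q^{*}(x,r,\theta t)$ from above by a constant multiple of $\frac{V(r)}{\phi(r)}\frac{\theta t}{V(\phi^{-1}(\theta t))}$. Note that $\theta t \ge \phi(r)$ is automatic from $t\ge \phi(r)$ and $\theta \ge 1$, so the hypotheses of those results are satisfied. This gives
\[
R(x,r,t,\theta) \;\ge\; \frac{V(r)}{\phi(r)}\frac{t}{V(\phi^{-1}(t))}\left[c_2 - c'\,\theta\,\frac{V(\phi^{-1}(t))}{V(\phi^{-1}(\theta t))}\right].
\]

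The crux is then to make the bracketed term positive by choosing $\theta$ large. Here I would invoke \eqref{vp}, which under Assumption \ref{assmp-1}(i) yields
\[
\frac{V(\phi^{-1}(\theta t))}{V(\phi^{-1}(t))} \;\ge\; c_0\,\theta^{d_1/d_4}.
\]
Since $d_1>d_4$, the correction term is dominated by $(c'/c_0)\,\theta^{1-d_1/d_4}$, which tends to $0$ as $\theta\to\infty$. Therefore there is a constant $c_1>0$ (depending only on $c_0,c',c_2,d_1,d_4$) such that for all $\theta\ge c_1$ the bracket exceeds $c_2/2$, giving the claimed lower bound with $c_2$ replaced by $c_2/2$.

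No step here is really an obstacle; the only thing to be careful about is the direction of the inequalities (applying the \emph{lower} bound of Proposition \ref{prop-2} at time $t$ against the \emph{upper} bound of Remark \ref{r-prop-1} at time $\theta t$) and the use of $d_1>d_4$ to make the volume ratio grow faster than $\theta$. The rest is an algebraic combination of already-established facts.
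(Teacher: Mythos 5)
Your proposal is correct and follows essentially the same route as the paper: bound $R$ below by the difference of a tail probability and a later tail probability, apply Proposition \ref{prop-2} for the lower bound at time $t$ and the Proposition \ref{prop-1}/Remark \ref{r-prop-1} upper bound at time $\theta t$, then use \eqref{vp} together with $d_1>d_4$ to make the subtracted term small for $\theta$ large. The only stylistic difference is that you phrase the decomposition as a subadditivity inequality $Q(x,r,t)\le R(x,r,t,\theta)+Q^*(x,r,\theta t)$ and invoke the $Q^*$-version via Remark \ref{r-prop-1}, whereas the paper writes $R(x,r,t,\theta)=Q(x,r,t)-Q(x,r,\theta t)$ directly and applies Proposition \ref{prop-1}; your formulation is actually the more careful one, since the paper's displayed relation is in truth an inequality $R\ge Q(x,r,t)-Q(x,r,\theta t)$, which is exactly what the argument needs.
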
 \begin{proof} By the definition, we have
$$R(x,r,t,\theta)= Q(x,r,t)- Q(x,r,\theta t).$$ This along with Propositions \ref{prop-1} and \ref{prop-2} yields that
$$R(x,r,t,\theta)\ge c_2 \frac{V(r)}{\phi(r)} \frac{t}{V(\phi^{-1}(t))}- c_1\frac{V(r)}{\phi(r)} \frac{\theta t}{V(\phi^{-1}(\theta t))}.$$
By using  \eqref{vp} and taking $\theta>1$ such that
$$\frac{c_1}{c_0c_2} \theta^{1-d_1/d_4}\le\frac{1}{2}$$ (here $c_0>0$ is the constant $c_0$ in \eqref{vp}), we arrive at that
$$R(x,r,t,\theta)\ge \frac{c_2}{2} \frac{V(r)}{\phi(r)} \frac{t}{V(\phi^{-1}(t))}.$$ The proof is complete.
 \end{proof}

We need the following Borel-Cantelli lemma taken from \cite[Lemma B]{Tak2}, which is a simplification of \cite[Theorem 1]{CE}.
\begin{lemma}\label{0-1-lemma} Let $(A_k)_{k\ge1}$ be a sequence of events satisfying the following three conditions:
\begin{itemize}
\item[(i)] $$\sum_{k=1}^\infty \Pp(A_k)=\infty.$$
\item[(ii)] $$\Pp(\limsup A_k)=0\textrm{ or }1.$$
\item[(iii)] There exist two constants $c_1,c_2>0$ with the following property{\rm:} to each $A_j$ there corresponds a set of events $A_{j_1}, \cdots, A_{j_s}$ belonging to $\{A_k\}_{k\ge1}$ such that
    $$\sum_{i=1}^s \Pp(A_j\cap A_{j_i})\le c_1 \Pp(A_j)$$ and that for other $A_i$ than $A_{j_i}$ $(1\le i\le s)$
which stands after $A_j$ in the sequence $(A_k)_{k\ge1}$
{\rm (}viz.\ $i>j${\rm )},
the inequality
    $$\Pp(A_j\cap A_i)\le c_2\Pp(A_j)\Pp(A_i)$$ holds.
\end{itemize}
Then, infinity many events $(A_k)_{k\ge1}$ occur with probability $1$.
\end{lemma}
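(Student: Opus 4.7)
The plan is to adapt the classical Chung--Erd\H{o}s second-moment argument to the localized independence structure supplied by (iii). Set $S_N=\sum_{k=1}^N \I_{A_k}$, so that $\Ee S_N=\sum_{k=1}^N \Pp(A_k)\to\infty$ by (i); the aim is to produce a positive lower bound for $\Pp(\limsup_k A_k)$, which (ii) then upgrades to probability one.

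The core step is a second-moment estimate. Write
\[
\Ee S_N^2 = \Ee S_N + 2\!\!\sum_{1\le j<k\le N}\!\! \Pp(A_j\cap A_k),
\]
and, for each fixed $j$, split the inner sum over $k>j$ according to whether $k$ belongs to the designated index set $\{j_1,\ldots,j_s\}$ from (iii). The in-set contribution is bounded by $\sum_{i=1}^{s}\Pp(A_j\cap A_{j_i})\le c_1\Pp(A_j)$, and the out-of-set contribution is at most $c_2\Pp(A_j)\sum_{k>j}\Pp(A_k)\le c_2\Pp(A_j)\Ee S_N$. Summing in $j$ yields
\[
\Ee S_N^2 \le (1+2c_1)\Ee S_N + 2c_2(\Ee S_N)^2.
\]

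Applying the Paley--Zygmund inequality $\Pp\bigl(S_N\ge \tfrac12 \Ee S_N\bigr)\ge \tfrac14 (\Ee S_N)^2/\Ee S_N^2$, combined with the previous estimate and $\Ee S_N\to\infty$, one obtains a uniform lower bound $\Pp\bigl(S_N\ge \tfrac12\Ee S_N\bigr)\ge c_3>0$ for all sufficiently large $N$. In particular $\Pp(\sup_N S_N\ge M)\ge c_3$ for every $M$, so continuity from above gives $\Pp(\limsup_k A_k)=\Pp\bigl(\sum_k\I_{A_k}=\infty\bigr)\ge c_3$, and the zero-one dichotomy (ii) forces $\Pp(\limsup_k A_k)=1$.

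The main obstacle is the second-moment decomposition: condition (iii) only supplies a pointwise product estimate \emph{outside} a short list of exceptional neighbors for each $A_j$, and one must check that the exceptional contribution adds merely a linear-in-$\Ee S_N$ term (harmless once $\Ee S_N\to\infty$) while the non-exceptional pairs produce the dominant quadratic term with the fixed constant $2c_2$. This balance is precisely what the Paley--Zygmund method needs, and replaces the pairwise independence used in the standard Erd\H{o}s--R\'enyi version.
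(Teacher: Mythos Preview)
Your argument is correct. The paper, however, does not supply its own proof of this lemma: it simply imports the statement from \cite[Lemma~B]{Tak2}, noting that this is a simplification of \cite[Theorem~1]{CE} (Chung--Erd\H{o}s). There is therefore no in-paper proof to compare against.

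That said, your route is exactly the classical Chung--Erd\H{o}s second-moment method that underlies those references: bound $\Ee S_N^2$ by splitting pairs into the exceptional neighbors (linear in $\Ee S_N$) and the rest (quadratic via the product bound), apply Paley--Zygmund to extract a uniform positive lower bound for $\Pp(S_N\ge \tfrac12\Ee S_N)$, pass to $\Pp(\limsup_k A_k)>0$, and invoke the zero-one hypothesis (ii). So your proof reconstructs precisely what the cited sources do, and nothing further is needed.
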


The following result has been proved in \cite[Theorem 2.10]{KKW}.

\begin{lemma}[\bf{The Zero-One Law for Tail Events}]\label{t:01tail}
Let $p(t,x,y)$ satisfy \eqref{hkestimate} as above, and let $A$ be a
tail event. Then, either $\Pp^x(A)$ is $0$ for all $x$ or else it is
$1$ for all $x \in M$.
\end{lemma}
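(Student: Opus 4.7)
My plan is to reduce the claim to a Liouville-type property for bounded harmonic functions of the semigroup and then invoke L\'evy's $0$-$1$ law. Set $f(x) := \Pp^x(A)$. The starting point will be to observe that, since $A$ is a tail event, the Markov property (combined with the essential shift-invariance of any event in $\bigcap_{t\ge 0} \sigma(X_s : s \ge t)$) gives $\Pp^x(A\mid \mathscr{F}_t) = f(X_t)$ $\Pp^x$-a.s., and hence $f = T_t f$ for every $t > 0$. In particular,
$$ f(x) = \int_M p(t,x,z)\, f(z)\,\mu(\dd z), \quad x\in M,\ t>0. $$

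The main analytic input will be a uniform Harnack-type inequality derived from Lemma~\ref{comp-heat}. Given any $x, y \in M$, I will pick $t$ large enough that $\phi^{-1}(t) \ge d(x,y)$; then Lemma~\ref{comp-heat} yields $p(t,x,z) \asymp p(t,y,z)$ uniformly in $z\in M$, with comparison constants independent of $x$, $y$, $z$, $t$. Substituting this into the harmonicity identity above, applied at $x$ and at $y$, produces a single constant $C \ge 1$ such that
$$ C^{-1} f(y) \le f(x) \le C f(y) \quad \text{for all } x, y \in M. $$

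With this global Harnack inequality in hand, the dichotomy is immediate. Either $\inf_{x\in M} f(x) = 0$, in which case $f \equiv 0$ and $\Pp^x(A) = 0$ for every $x$; or $c := \inf_{x\in M} f(x) > 0$. In the latter case, L\'evy's $0$-$1$ law applied to $A \in \mathscr{F}_\infty$ gives $\Pp^x(A\mid\mathscr{F}_t) \to \I_A$ $\Pp^x$-a.s. as $t \to \infty$; combining with $\Pp^x(A\mid\mathscr{F}_t) = f(X_t) \ge c > 0$ forces $\I_A \ge c$ $\Pp^x$-a.s., and since $\I_A\in\{0,1\}$ this gives $\I_A = 1$ $\Pp^x$-a.s., i.e., $f(x) = 1$ for every $x \in M$.

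The only delicate point is the passage from the tail property of $A$ to the harmonicity identity $\Pp^x(A\mid\mathscr{F}_t) = f(X_t)$ (equivalently $T_t f = f$), which relies on the standard but slightly technical fact that tail events are, modulo null sets, invariant under the shift operators $\theta_t$. Once this reduction is made, the remainder of the argument is an immediate consequence of the two-sided heat kernel bound~\eqref{hkestimate} via Lemma~\ref{comp-heat} together with martingale convergence.
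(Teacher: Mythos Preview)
The paper does not actually prove this lemma; it simply cites \cite[Theorem~2.10]{KKW}. Your argument is correct and follows the standard route to such zero-one laws: harmonicity of $x\mapsto\Pp^x(A)$ via the Markov property, then a global Harnack inequality obtained from Lemma~\ref{comp-heat} (which is precisely the right tool here, and whose comparison constant is indeed independent of $x,y,z,t$), and finally L\'evy's martingale convergence to upgrade ``bounded below by $c>0$'' to ``identically $1$''. One point you use implicitly in the last step: the bound $f(X_t)\ge c$ for all $t>0$ requires $X_t\in M$ for all $t$, i.e.\ conservativeness of $X$; this does hold under \eqref{hkestimate} (it is established in \cite{KKW}), but it is not stated explicitly in this section of the paper.
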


Finally, we are in a position to present the
\begin{proof}[{\bf Proof of the Zero-Probability Statement in Theorem $\ref{lower-rate}$}]
By \eqref{scaling-cond} and the fact that $g(t)\to 0$ as $t
\to\infty$, we can without loss of generality assume that $g(t)\le
1$ and $\varphi(t)\le \phi^{-1}(\kappa t)$ for all $t>0$ and some
constant $\kappa\in(0,1)$. Let $\theta>1$ be the constant in
Corollary \ref{cor-1} such that $1-1/\theta\ge \kappa$. Define
$$A_n= \big\{d(X_s, X_0))\le c\varphi (\theta^{n+1})\textrm{ for some }s\in (\theta^n, \theta^{n+1}]\big\},$$
where $c\in(0,1)$ satisfies that $c\phi^{-1}(\theta^{n+1})\le
\phi^{-1}(\theta^n)$ for all $n\geq 1$. This implies that for any
$s\in [\theta^{n+1}, \theta^{n+2}]$,
\begin{equation}\label{comp-phi}
\varphi(\theta^{n+1})=\phi^{-1}(\theta^{n+1})g(\theta^{n+1})
\geq c\phi^{-1}(\theta^{n+2})g(s)\geq c\phi^{-1}(s)g(s)=c\varphi(s).
\end{equation}
Then, according to Corollary \ref{cor-1} and (\ref{ass-v-phi-1}), there is a constant $c_0>0$ such that
\begin{align*}
\sum_{n=1}^\infty\Pp^x(A_n)&=\sum_{n=1}^\infty R(x,c\varphi (\theta^{n+1}) , \theta^n,\theta)\\
&\ge c_0\sum_{n=1}^\infty \frac{V(c\varphi (\theta^{n+1}))}{\phi(c\varphi (\theta^{n+1}))}  \frac{\theta^n}{V(\phi^{-1}(\theta^n))}\\
&\asymp \sum_{n=1}^\infty \frac{V(c\varphi (\theta^{n+1}))}{\phi(c\varphi (\theta^{n+1}))}  \frac{\theta^{n+2}-\theta^{n+1}}{V(\phi^{-1}(\theta^{n+1}))}\\
&\gtrsim \int_1^{\infty}\frac{V(\varphi(t))}{\phi(\varphi(t))V(\phi^{-1}(t))}\,\dd t.
\end{align*}

We define a sequence of stopping times $\{\sigma_n\}_{n\ge1}$ by
$$\sigma_n=\inf\big\{ t\in (\theta^n,\theta^{n+1}]: d(X_t, X_0)\le c\varphi(\theta^{n+1})\big\},$$ where $\inf \emptyset=\infty.$ As mentioned above,
we assume that $\varphi(t)\le \phi^{-1}(\kappa t)$ for all $t>0$.
By the strong Markov property,  for any $i\ge j+2$,
\begin{align*}
&\Pp^x(A_i\cap A_j)\\
&=\Pp^x(\sigma_j\leq \theta^{j+1}, \sigma_i\leq \theta^{i+1})\\
&=\Ee^x\Big(\Pp^{X_{\sigma_j}}\left(\text{$d(X_t,x)\leq c\varphi(\theta^{i+1})$
for some $t\in (\theta^i-s,\theta^{i+1}-s]$}\right)\Big|_{s={\sigma_j}}; \sigma_j\leq \theta^{j+1}\Big)\\
&\leq \Ee^x \Big(\Pp^{X_{\sigma_j}}
\left(\text{$d(X_t,x)\leq c\varphi(\theta^{i+1})$ for some $t>\theta^i-\theta^{j+1}$}\right); \sigma_j\leq \theta^{j+1}\Big)\\
&\le \Pp^x( \sigma_j\leq \theta^{j+1})
\sup_{d(z,x)\le c\varphi(\theta^{j+1})} \Pp^z\left(d(X_t,x)\le c\varphi(\theta^{i+1})\textrm{ for some }t>\theta^i-\theta^{j+1}\right).
\end{align*}
Note that, for any $z\in M$,
\begin{align*}
&\Pp^z\left(d(X_t,x)\le c\varphi(\theta^{i+1})\textrm{ for some }t\ge\theta^i-\theta^{j+1}\right)\\
&=\int_{\overline{B(x,c\varphi(\theta^{i+1}))}}p(\theta^i-\theta^{j+1},z,y)\,\mu({\rm d}y)\\
&\quad
+\int_{\overline{B(x,c\varphi(\theta^{i+1}))}^c}p(\theta^i-\theta^{j+1},z,y)
\Pp^y(d(X_t,x)\le c\varphi(\theta^{i+1})\textrm{ for some
}t>0)\,\mu({\rm d}y).
\end{align*}
Since for all $i\geq j+2$,
$$\phi(c\varphi(\theta^{i+1}))\leq \phi(\varphi(\theta^i))\leq \phi(\phi^{-1}(\kappa\theta^i))
=\kappa \theta^i\leq \theta^i-\theta^{i-1} \leq
\theta^{i}-\theta^{j+1},$$ we get from Lemma \ref{comp-heat} that,
if $d(z,x)\leq c\varphi(\theta^{j+1})$, then
\begin{align*}
&\Pp^z\left(d(X_t,x)\le c\varphi(\theta^{i+1})\textrm{ for some }t\ge\theta^i-\theta^{j+1}\right)\\
&\asymp \Pp^x\left(d(X_t,x)\le c\varphi(\theta^{i+1})\textrm{ for
some }t\ge\theta^i-\theta^{j+1}\right).
\end{align*}
On the other hand, also due to
$$\phi(c\varphi(\theta^{i+1}))\leq \theta^{i}-\theta^{j+1}, \quad i\ge j+2,$$
Proposition \ref{prop-1} and Remark \ref{r-prop-1} imply that
\begin{align*}
\Pp^x\left(d(X_t,x)\le c\varphi(\theta^{i+1})\textrm{ for some
}t\ge\theta^i-\theta^{j+1}\right)
&\lesssim \frac{V(c\varphi(\theta^{i+1}))}{\phi(c\varphi(\theta^{i+1}))}\frac{\theta^i-\theta^{j+1}}{V(\phi^{-1}(\theta^i-\theta^{j+1}))}\\
&\lesssim
\frac{V(\varphi(\theta^i))}{\phi(\varphi(\theta^i))}\frac{\theta^i-\theta^{j+1}}{V(\phi^{-1}(\theta^i-\theta^{j+1}))},
\end{align*}
where the last inequality follows from \eqref{ass-v-phi-1} and
\eqref{comp-phi}.

Since
$$\frac{V(\phi^{-1}(R))}{V(\phi^{-1}(r))} \le c_0' \left( \frac{R}{r}\right)^{d_2/d_3},\quad R\ge r>0,$$ we have
$$V(\phi^{-1}(\theta^i-\theta^{j+1}))\ge c_1 V(\phi^{-1}(\theta^i)),\quad i\ge j+2,$$
which along with Proposition \ref{prop-2} yields that there exists a constant $c_2>0$ such that for all $i\ge j+2$,
$$\Pp^x(A_i\cap A_j)\le c_2 \Pp^x(A_i)\Pp^x(A_j).$$ On the other hand, we always have
$$\Pp^x(A_{j+1}\cap A_j)\le \Pp^x(A_j).$$
Furthermore, according to Lemma \ref{t:01tail}, we have $$\Pp(\limsup A_k)=0\textrm{ or }1.$$

Combining all the conclusions above with Lemma \ref{0-1-lemma} and the fact that
\begin{align*} A_k\subset \big\{ d(X_s,X_0)\le \varphi(s)\textrm{ for some }s\in(\theta^k, \theta^{k+1}] \big\},\end{align*}  we prove the desired assertion.
 \end{proof}

\subsection{Critical  Case}
In this subsection, we consider lower rate functions
under the following critical condition:
\begin{assumption} \label{critical}
\emph{The constants $d_i \ (i=1,2,3,4)$ in $(\ref{volume-cond})$ and $(\ref{scaling-cond})$
satisfy
$$d_1=d_2=d_3=d_4.$$}
\end{assumption}
Our main contribution in this part is
\begin{theorem}\label{critical-1}
Let $g(t)$ be a strictly positive function on $(0,\infty)$ such that
$g(t)\searrow0$ as $t\rightarrow\infty$.
Under  Assumption {\rm \ref{critical}}, if
$$\int_1^{\infty}\frac{1}{t|\log g(t)|}\,\dd t<\infty \quad (\text{{\it resp.} $=\infty$}),$$
then the function $\varphi(t)=\phi^{-1}\left(tg(t)\right)$ satisfies that for all $x_0\in M$,
$$\Pp^{x_0}(\text{$d(X_s,x_0)\geq \varphi(s)$ for all sufficiently large $s$})=1\quad (\text{{\it resp.} $=0$}).$$
\end{theorem}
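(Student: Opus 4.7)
The plan is to adapt the strategy of Theorem \ref{lower-rate} to the critical setting, but the Green-function-based approach of Section 4.1 must be replaced: under Assumption \ref{critical} one has $V(R)/V(r) \asymp (R/r)^{d_1} \asymp \phi(R)/\phi(r)$, hence $V(r) \asymp \phi(r)$ and $V(\phi^{-1}(t)) \asymp t$. Consequently $\int_1^{\infty} \dd s / V(\phi^{-1}(s)) = \infty$, the process $X$ is recurrent (cf.\ Remark \ref{recurrence-test}), and the Green function is identically infinite. The logarithmic factor $1/|\log g(t)|$ in the integral test will emerge from the identity $\phi(\varphi(t)) = tg(t)$, so that $\log\bigl(t/\phi(\varphi(t))\bigr) = |\log g(t)|$.

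The main technical step is to prove that, for a fixed $\theta > 1$, all sufficiently large $t$, and every $r$ with $\phi(r) \le t/2$,
\[
\Pp^{y}\bigl(d(X_s, x_0) \le r \text{ for some } s \in (t, \theta t]\bigr) \asymp \frac{1}{\log(t/\phi(r))}
\]
uniformly in starting points $y$ with $d(y, x_0) \le 2\phi^{-1}(t)$. This is the critical counterpart of Propositions \ref{prop-1}--\ref{prop-2}, with the Green-function factor $\phi(r)\,V(\phi^{-1}(t))/(V(r)\,t)$ replaced by $\log(t/\phi(r))$. I would prove both bounds through the strong Markov property at the first visit of $B := \overline{B(x_0,r)}$ after time $t$, which sandwiches the hitting probability by ratios of $\Ee^{y}\int_t^{\theta t}\indyk{X_s \in B}\,\dd s$ and $\Ee^{z}\int_0^{\theta t}\indyk{X_s \in B}\,\dd s$ with $z \in B$. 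The heat kernel estimate \eqref{hkestimate} yields $\Pp^{y}(X_s \in B) \asymp V(r)/V(\phi^{-1}(s)) \asymp V(r)/s$ whenever $s \ge \phi(d(y,x_0) \vee r)$; integration in $s$ then produces $V(r)\log\theta$ in the numerator and $V(r)\log(t/\phi(r))$ in the denominator, yielding the claim.

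With this estimate in hand, I would set $t_n = \theta^n$ and, following the template of the proof of Theorem \ref{lower-rate}, define
\[
A_n = \bigl\{d(X_s, x_0) \le c\varphi(t_{n+1}) \text{ for some } s \in (t_n, t_{n+1}]\bigr\},
\]
with $c \in (0,1)$ chosen so that $\varphi(s) \le c\varphi(t_{n+1})$ throughout $(t_n, t_{n+1}]$ (using the monotonicity of $g$ together with \eqref{scaling-cond}). Applying the Markov property at time $t_n$, integrating the key estimate against $p(t_n, x_0, \cdot)$, and comparing Riemann sums to integrals yields
\[
\Pp^{x_0}(A_n) \asymp \frac{1}{|\log g(t_{n+1})|}, \quad\text{hence}\quad \sum_{n\ge 1} \Pp^{x_0}(A_n) \asymp \int_1^{\infty}\frac{\dd t}{t|\log g(t)|}.
\]
The convergence case then follows from the first Borel--Cantelli lemma together with the inclusion $\{d(X_s, x_0) \le \varphi(s) \text{ for some arbitrarily large } s\} \subset \limsup_n A_n$. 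For the divergence case, I would mirror the zero-probability argument of Theorem \ref{lower-rate}, applying Lemma \ref{0-1-lemma} once the pairwise bound $\Pp^{x_0}(A_i \cap A_j) \lesssim \Pp^{x_0}(A_i)\Pp^{x_0}(A_j)$ for $i > j+1$ is verified via the strong Markov property at the first hit during block $j$ and Lemma \ref{comp-heat}, and invoking the tail zero-one law (Lemma \ref{t:01tail}).

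The main obstacle lies in the lower bound of the key hitting estimate: in the recurrent critical regime no analog of Lemma \ref{hit-two} is available, so $1/\log(t/\phi(r))$ must be extracted purely from occupation-time comparisons. Small values of $d(y, x_0)$ in the averaging against $p(t_n, x_0, \cdot)$ contribute an extra additive $\log(t/\phi(d(y,x_0)))$ correction, and one must verify that its heat-kernel average contributes only $O(1)$ to the numerator; the change of variables $u = V(a)$, together with $V(a) \asymp \phi(a)$ under Assumption \ref{critical}, ultimately makes this explicit. A secondary difficulty is the weak-dependence hypothesis in Lemma \ref{0-1-lemma}, where Lemma \ref{comp-heat} is essential for replacing the random post-block-$j$ position of the process by the reference point $x_0$.
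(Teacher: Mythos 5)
Your strategy matches the paper's in its broad strokes: both replace the Green-function calculus of Section 4.1 by occupation-time comparisons, extract the factor $1/|\log g|$ from the hitting estimate $\Pp^{x_0}(\text{hit } B(x_0,r) \text{ in } (t,\theta t]) \asymp \bigl(\log(t/\phi(r))\bigr)^{-1}$, run Borel--Cantelli along dyadic blocks $t_n = \theta^n$, and invoke Lemma \ref{t:01tail} together with a Chung--Erd\H{o}s-type second Borel--Cantelli lemma for the divergence case. Where you do differ from the paper is in how you extract the lower bound on the occupation-time hitting estimate: you apply the strong Markov property at the first visit to the target ball after time $t$, sandwiching the hitting probability by a ratio of two expected sojourn times, whereas the paper's Lemma \ref{hit-interval} gets the lower bound from a Cauchy--Schwarz (second-moment) argument on $\int_a^b \indyk{X_u\in B}\,\dd u$. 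These are two standard, essentially equivalent routes; yours avoids the factor of $2$ and the passage to the doubled radius $2r$ in the denominator, while the paper's keeps everything at starting point $x_0$ and never needs uniformity in the starting point for the one-probability direction.

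There is, however, a sign slip you should fix. You define a single family
$A_n = \{d(X_s,x_0)\le c\varphi(t_{n+1})\text{ for some }s\in(t_n,t_{n+1}]\}$
with $c\in(0,1)$ chosen so that $\varphi(s)\le c\varphi(t_{n+1})$ for all $s\in(t_n,t_{n+1}]$, and then claim both
$\{d(X_s,x_0)\le\varphi(s)\text{ i.o.}\}\subset\limsup A_n$ (for the convergence case) and the use of $A_n$ as a lower-bounding family for the divergence case. The inequality $\varphi(s)\le c\varphi(t_{n+1})$ with $c<1$ cannot hold at $s=t_{n+1}$; and in any event the convergence direction needs $A_n$ to \emph{contain} the event $\{d(X_s,x_0)\le\varphi(s)\text{ for some }s\in(t_n,t_{n+1}]\}$, hence a \emph{larger} radius (the paper uses $\phi^{-1}(t_{n+1}g(t_n))\ge\varphi(s)$), while the divergence direction needs $A_n$ to be \emph{contained} in it, hence a \emph{smaller} radius (the paper takes $c\varphi(t_{n+1})\le\phi^{-1}(t_n g(t_{n+1}))\le\varphi(s)$). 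You must use two distinct families, one per direction; once you do this, the Riemann-sum comparison gives both $\sum\Pp^{x_0}(A_n)\lesssim\int \dd t/(t|\log g(t)|)$ and $\sum\Pp^{x_0}(A_n)\gtrsim\int \dd t/(t|\log g(t)|)$ with the appropriate $A_n$ in each.

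Two further minor points. First, the ``extra additive $\log(t/\phi(d(y,x_0)))$ correction'' you worry about for $y$ near $x_0$ does not arise: the numerator $\Ee^y\int_t^{\theta t}\indyk{X_s\in B}\,\dd s$ integrates over $s\ge t\ge 2\phi(r)$, where $\Pp^y(X_s\in B)\asymp V(r)/V(\phi^{-1}(s))\asymp V(r)/s$ holds for \emph{all} $y$ with $d(y,x_0)\lesssim\phi^{-1}(t)$ regardless of how small $d(y,x_0)$ is, so the numerator is uniformly $\asymp V(r)\log\theta$. (The log correction only appears if the time integral starts at $0$, which is precisely the case of the denominator and is accounted for.) Relatedly, there is no need to condition at time $t_n$ and average the key estimate against $p(t_n,x_0,\cdot)$ for the one-probability statement; applying the hitting estimate directly with $y=x_0$, $a=t_n$, $b=t_{n+1}$ (as the paper's Lemma \ref{hit-0} does) is simpler and avoids the uniformity issue entirely. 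Second, the subcritical ``Green-function factor'' is $\frac{V(r)}{\phi(r)}\cdot\frac{t}{V(\phi^{-1}(t))}$ (Propositions \ref{prop-1}--\ref{prop-2}), not its reciprocal as written.
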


Since Assumption {\rm \ref{critical}} implies that $X$ is recurrent
and can not hit a point (see Proposition \ref{recurrence} below), the
rate function in Theorem \ref{critical-1} describes how $X$ comes
close to the starting point for all sufficiently large time. Spitzer
established in \cite{S} an integral test on the zero-one law of the
lower rate functions for the two-dimensional Brownian motion.
Takeuchi and Watanabe \cite{TW}  extended this test to the
one-dimensional Cauchy process, and Khoshnevisan \cite{K} also
extended it to the direct product of stable processes. Theorem
\ref{critical-1} is an extension of \cite{S,TW}
to symmetric Markov processes on general state spaces. According to
Theorem \ref{critical-1}, the function
$$\varphi(t)=\phi^{-1}\left(\frac{t}{\exp{((\log t)(\log\log t)^{1+\varepsilon})}}\right), \quad \varepsilon\ge -1$$
is a lower rate function for $X$, if and only if $\varepsilon>0$.

\ \

To prove Theorem \ref{critical-1}, we first follow the argument of Khoshnevisan \cite{K} and obtain some key probability estimates.
For any $x_0\in M$, $r>0$ and $0<a<b$, set
$$\Phi(x_0,r,a,b)=\Pp^{x_0}\big(d(X_s,x_0)\le r\textrm{ for some }s\in(a, b]\big).$$

\begin{lemma}\label{hit-interval}
 For any $x_0\in M$, $r>0$ and $0<a<b$,
it holds that
\begin{equation*}
\begin{split} \frac{\int_a^b \Pp^{x_0}\left(d(X_u,x_0)\le r\right)\,\dd u}{2\int_0^{b-a}
\Big(\sup\limits_{d(x,x_0)\le r}\Pp^x(d(X_{u},x)\le 2r)\Big)\,\dd u}
&\le \Phi(x_0,r,a,b)\\
&\le \frac{ \int_a^{2b-a} \Pp^{x_0}\left(d(X_u,x_0)\le r\right)\,\dd u}
{  \int_0^{b-a}\Big( \inf\limits_{d(x,x_0)\le r} \Pp^{x}(d(X_u,x)\le r)\Big)\,\dd u}.
\end{split}
\end{equation*}
\end{lemma}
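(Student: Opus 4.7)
The plan is to obtain both inequalities via a first/second moment analysis of the occupation time of the closed ball $A:=\overline{B(x_0,r)}$, combined with the strong Markov property applied at the first entry time into $A$.

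For the lower bound, I set $L:=\int_a^b \I_A(X_s)\,\dd s$ and observe that $\{L>0\}\subset\{d(X_s,x_0)\le r\text{ for some }s\in(a,b]\}$, so the Paley--Zygmund inequality yields
\[
\Phi(x_0,r,a,b)\ \ge\ \Pp^{x_0}(L>0)\ \ge\ \frac{(\Ee^{x_0}L)^2}{\Ee^{x_0}[L^2]}.
\]
Fubini gives $\Ee^{x_0}L=\int_a^b \Pp^{x_0}(d(X_s,x_0)\le r)\,\dd s$. For $\Ee^{x_0}[L^2]$ I would symmetrise $L^2=2\iint_{a\le s\le t\le b}\I_A(X_s)\I_A(X_t)\,\dd s\,\dd t$, apply the Markov property at time $s$, and use the triangle inequality: on $\{d(X_s,x_0)\le r\}$, the event $\{d(X_t,x_0)\le r\}$ forces $d(X_t,X_s)\le 2r$, whence
\[
\Pp^{X_s}(d(X_{t-s},x_0)\le r)\ \le\ \sup_{d(x,x_0)\le r}\Pp^x(d(X_{t-s},x)\le 2r).
\]
Factoring this supremum out produces $\Ee^{x_0}[L^2]\le 2\,\Ee^{x_0}L\cdot \int_0^{b-a}\sup_{d(x,x_0)\le r}\Pp^x(d(X_u,x)\le 2r)\,\dd u$, which together with Paley--Zygmund gives the asserted lower bound.

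For the upper bound, I introduce the stopping time $\sigma:=\inf\{s\ge a:X_s\in A\}$. Since $A$ is closed and $X$ is a Hunt process, quasi-left continuity gives $X_\sigma\in A$ on $\{\sigma<\infty\}$, and $\Phi(x_0,r,a,b)=\Pp^{x_0}(\sigma\le b)$. On $\{\sigma\le b\}$ one has $[\sigma,\sigma+(b-a)]\subset[a,2b-a]$, so
\[
\int_a^{2b-a}\I_A(X_s)\,\dd s\ \ge\ \I_{\{\sigma\le b\}}\int_\sigma^{\sigma+(b-a)}\I_A(X_s)\,\dd s.
\]
Taking $\Ee^{x_0}$ and applying the strong Markov property at $\sigma$ turns the right-hand side into $\Ee^{x_0}\!\bigl[\I_{\{\sigma\le b\}}\int_0^{b-a}\Pp^{X_\sigma}(d(X_u,x_0)\le r)\,\dd u\bigr]$. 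The last step is to replace, for $y\in A$, $\Pp^y(d(X_u,x_0)\le r)$ by the quantity $\Pp^y(d(X_u,y)\le r)$ appearing in the lemma's denominator: under Assumption~\ref{critical} the two-sided heat kernel estimate \eqref{hkestimate}, the volume doubling \eqref{volume-cond}, and Lemma~\ref{comp-heat} together show that the integral of $p(u,y,\cdot)$ over any ball of radius $r$ whose centre lies within $r$ of $y$ is of order $V(r\wedge\phi^{-1}(u))/V(\phi^{-1}(u))$, so both probabilities are comparable up to a constant. Taking the infimum over such centres yields the upper bound.

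The main obstacle is precisely this last comparison: the naive inequality $\Pp^y(d(X_u,x_0)\le r)\ge \Pp^y(d(X_u,y)\le r)$ fails on the nose (consider $y$ at the boundary of $B(x_0,r)$), so the step genuinely uses the heat kernel comparability from Lemma~\ref{comp-heat} combined with the critical scaling $d_1=d_2=d_3=d_4$ (which makes both $V$ and $\phi$ behave as pure power functions and thus renders the two integrals of $p(u,y,\cdot)$ comparable regardless of where the ball is centred). A pure triangle-inequality argument would only deliver the upper bound with $r$ replaced by $2r$ on one side; this weaker version still suffices for the integral test in Theorem~\ref{critical-1} but not for the sharp form stated in the lemma.
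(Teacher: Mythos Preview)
Your approach is essentially the same as the paper's on both sides: the lower bound is obtained by the second-moment (Paley--Zygmund/Cauchy--Schwarz) method applied to the occupation time $L=\int_a^b \I_{\{d(X_u,x_0)\le r\}}\,\dd u$, and the upper bound by the strong Markov property at the first entrance time $T=\inf\{s\in(a,b]:d(X_s,x_0)\le r\}$ together with a first-moment bound on the occupation time over $[a,2b-a]$.

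The obstacle you flag in the last two paragraphs is exactly right, but your proposed resolution is more elaborate than what the paper does. The paper does \emph{not} invoke Lemma~\ref{comp-heat} or the critical scaling to compare $\Pp^y(d(X_u,x_0)\le r)$ with $\Pp^y(d(X_u,y)\le r)$. Instead it simply works with the event $\{d(X_u,x_0)\le 2r\}$ in the numerator and uses the triangle inequality $\{d(X_u,X_T)\le r\}\subset\{d(X_u,x_0)\le 2r\}$ on $\{d(X_T,x_0)\le r\}$. The displayed conclusion of the paper's argument is literally
\[
\Pp^{x_0}(T\le b)\ \le\ \frac{\int_a^{2b-a}\Pp^{x_0}(d(X_u,x_0)\le 2r)\,\dd u}{\int_0^{b-a}\bigl(\inf_{d(x,x_0)\le r}\Pp^x(d(X_u,x)\le r)\bigr)\,\dd u},
\]
i.e.\ precisely the ``weaker version with $r$ replaced by $2r$'' that you say you can get by the triangle inequality. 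In other words, the lemma as stated has $r$ in the numerator, but the paper's own proof delivers $2r$; this is a harmless discrepancy because in every application (Lemma~\ref{hit-0} via Lemma~\ref{hit-ball}) the volume doubling \eqref{volume-cond} makes $V(2r)\asymp V(r)$, so the $r$ and $2r$ versions are equivalent up to constants. Your detour through heat-kernel comparability is therefore unnecessary: just carry the $2r$ through, exactly as you suggested at the end.
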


\begin{proof} For any $r>0$ and $b>a>0$, define $$T=T_{r,a,b}=\inf\{s\in(a,b]: d(X_s,x_0)\le r\}.$$
Applying the strong Markov property at time $T$, we see that
\begin{equation*}
\begin{split}
 \Ee^{x_0}\left( \int_a^{2b-a} \I_{\{d(X_u,x_0)\le 2r\}}\,\dd u \right)
=& \int_a^{2b-a} \Pp^{x_0}(d(X_u,x_0)\le 2r)\,\dd u\\
 = &  \int_a^{2b-a} \Pp^{x_0}(d(X_u,x_0)\le 2r\big| T\le b)\,\,\Pp^{x_0}(T\le b)\,\dd u\\
 \ge & \int_0^{b-a} \left(  \inf_{d(x,x_0)\le r} \Pp^{x} (d(X_u,x)\le r)\right)\,\dd u \,\, \Pp^{x_0}(T\le b).
\end{split}
\end{equation*} That is, we have
$$\Pp^{x_0}(T\le b)\le  \frac{\Ee^{x_0}\left( \int_a^{2b-a} \I_{\{d(X_u,x_0)\le 2r\}}\,\dd u \right)}{  \int_0^{b-a}\left( \inf_{d(x,x_0)\le r} \Pp^{x}(d(X_u,x)\le r)\right)\,\dd u},$$ which yields the upper bound.

On the other hand, using the Markov property,
\begin{equation*}
\begin{split}
 &\Ee^{x_0}\left(\int_a^b \I_{\{d(X_u,x_0)\le r\}}\,\dd u\right)^2\\
 &=2\Ee^{x_0}\left(\int_a^b\int_a^u \I_{\{d(X_u,x_0)\le r\}}\I_{\{d(X_v,x_0)\le r\}}\,\dd v\,\dd u\right)\\
 &\le 2 \Ee^{x_0}\left(\int_a^b\int_a^u \I_{\{d(X_u,X_v)\le 2r\}}\I_{\{d(X_v,x_0)\le r\}}\,\dd v\,\dd u\right)\\
 &\le 2\int_a^b\int_a^u \Pp^{x_0} (d(X_v,x_0)\le r)\sup_{d(x,x_0)\le r}\Pp^x(d(X_{u-v},x)\le 2r)\,\dd v\,\dd u\\
 &\le 2\int_a^b \Pp^{x_0} (d(X_u,x_0)\le r)\,\dd u \int_0^{b-a} \sup_{d(x,x_0)\le r}\Pp^x(d(X_{u},x)\le 2r)\,
 \dd u\\
 &= 2 \Ee^{x_0}\left(\int_a^b \I_{\{d(X_u,x_0)\le r\}}\,\dd u\right)\int_0^{b-a} \sup_{d(x,x_0)\le r}\Pp^x(d(X_{u},x)\le 2r)\,\dd u.
\end{split}
\end{equation*}
According to the Cauchy-Schwarz inequality,
\begin{equation*}
\begin{split}
\Ee^{x_0}\left(\int_a^b \I_{\{d(X_u,x_0)\le r\}}\,\dd u\right) &=\Ee^{x_0} \left(\int_a^b \I_{\{d(X_u,x_0)\le r\}}\,\dd u; T\le b\right) \\
 &\le \sqrt{\Ee^{x_0}\left(\int_a^b \I_{\{d(X_u,x_0)\le r\}}\,\dd u\right)^2} \sqrt{\Pp^{x_0}(T\le b)}.
\end{split}
\end{equation*}
Hence, by these inequalities above, we arrive at
\begin{equation*}
\begin{split}
&\Ee^{x_0}\left(\int_a^b \I_{\{d(X_u,x_0)\le r\}}\,\dd u\right)\\
\le  &\sqrt{2} \sqrt{\Ee^{x_0}\left(\int_a^b \I_{\{d(X_u,x_0)\le r\}}\,\dd u\right)}\sqrt{\int_0^{b-a} \sup_{d(x,x_0)\le r}\Pp^x(d(X_{u},x)\le 2r)\,\dd u}\\
&\times \sqrt{\Pp^{x_0}(T\le b)}.
\end{split}
\end{equation*} Therefore,
$$\Pp^{x_0}(T\le b)\ge \frac{\Ee^{x_0}\left(\int_a^b \I_{\{d(X_u,x_0)\le r\}}\,\dd u\right)}{2\int_0^{b-a}\left(\sup_{d(x,x_0)\le r}\Pp^x(d(X_{u},x)\le 2r)\right)\,\dd u}.$$ This completes the proof.
\end{proof}

\begin{lemma}\label{hit-ball} For any $r, t>0$ and for all $x\in M$,
$$\Pp^x(d(X_t,x)\le r) \asymp 1\wedge \frac{ V(r)}{V(\phi^{-1}(t))}.$$ \end{lemma}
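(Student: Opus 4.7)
The plan is to use the two-sided heat kernel bound \eqref{hkestimate} together with the volume estimate $\mu(B(x,r))\asymp V(r)$ mentioned in the remark after \eqref{scaling-cond}. Writing
$$\Pp^x(d(X_t,x)\le r)=\int_{B(x,r)}p(t,x,y)\,\mu(\dd y),$$
the quantity to control depends on how $r$ compares with $\phi^{-1}(t)$, so I would split into the two natural cases $r\le \phi^{-1}(t)$ and $r>\phi^{-1}(t)$.

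In the case $r\le \phi^{-1}(t)$, every $y\in B(x,r)$ satisfies $d(x,y)\le \phi^{-1}(t)$, so \eqref{hkestimate} gives $p(t,x,y)\asymp 1/V(\phi^{-1}(t))$. Integrating over $B(x,r)$ and using $\mu(B(x,r))\asymp V(r)$ produces
$$\Pp^x(d(X_t,x)\le r)\asymp \frac{V(r)}{V(\phi^{-1}(t))},$$
which in this regime coincides with $1\wedge (V(r)/V(\phi^{-1}(t)))$ up to constants because $V$ is increasing so the ratio is $\le 1$.

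In the case $r>\phi^{-1}(t)$, the target is $1\wedge (V(r)/V(\phi^{-1}(t)))=1$. The upper bound is trivial since we are estimating a probability. For the lower bound, monotonicity in $r$ of the event gives
$$\Pp^x(d(X_t,x)\le r)\ge \Pp^x(d(X_t,x)\le \phi^{-1}(t)),$$
and applying the previous case with $r$ replaced by $\phi^{-1}(t)$ yields $\Pp^x(d(X_t,x)\le \phi^{-1}(t))\asymp V(\phi^{-1}(t))/V(\phi^{-1}(t))=1$. Combining the two cases gives the claimed equivalence.

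This is essentially a one-step computation; I do not expect a genuine obstacle. The only mild subtlety is ensuring the implicit constants in $\mu(B(x,r))\asymp V(r)$ are available uniformly in $x$, which the setup of Section~\ref{section4} guarantees (the remark following \eqref{scaling-cond} asserts the volume equivalence holds for all $x\in M$ and $r>0$, with $\mathcal{N}=\emptyset$), and that the upper bound $p(t,x,y)\lesssim 1/V(\phi^{-1}(t))$ used implicitly in the case $r\le \phi^{-1}(t)$ is indeed the correct reading of \eqref{hkestimate}.
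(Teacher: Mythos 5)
Your argument is correct and is essentially the same computation the paper has in mind: the paper's proof consists of exactly one sentence asserting the result follows directly from the integral representation $\Pp^x(d(X_t,x)\le r)=\int_{B(x,r)}p(t,x,y)\,\mu(\dd y)$ and the two-sided heat kernel bound \eqref{hkestimate}, and your case split on $r\lessgtr\phi^{-1}(t)$ together with $\mu(B(x,r))\asymp V(r)$ is the natural way to unpack that sentence.
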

\begin{proof} The conclusion directly follows from
$$\Pp^x(d(X_t,x)\le r)=\int_{d(y,x)\le r} p(t,x,y)\,\mu(\dd y)$$ and \eqref{hkestimate}.
\end{proof}

\begin{remark}\label{growth-rate}
Under (\ref{volume-cond}) and (\ref{scaling-cond}),
Assumption \ref{critical} implies that
$$V(r)\asymp \phi(r)\asymp r^{d_1},$$
and thus
$$\int_{\alpha}^{\beta}\frac{1}{V(\phi^{-1}(u))}\,\dd u
\asymp \int_{\alpha}^{\beta}\frac{1}{u}\,\dd u=\log\left(\frac{\beta}{\alpha}\right)$$
for any $0<\alpha<\beta$.
\end{remark}

\begin{lemma}\label{hit-0}
Let Assumption {\rm \ref{critical}} hold.
If $\phi(r)\leq b-a$, then
$$
\Pp^{x_0}(\text{$d(X_s,x_0)\leq r$ for some $s\in (a,b]$})
\lesssim \frac{(\phi(r)-a)_+ +\phi(r)\log\left(\frac{2b-a}{a\vee \phi(r)}\right)}
{\phi(r)\left\{1+\log\left(\frac{b-a}{\phi(r)}\right)\right\}}
$$
and
$$
\Pp^{x_0}(\text{$d(X_s,x_0)\leq r$ for some $s\in (a,b]$})
\gtrsim \frac{(\phi(r)-a)_+ +\phi(r)\log\left(\frac{b-a}{a\vee \phi(r)}\right)}
{\phi(r)\left\{1+\log\left(\frac{b-a}{\phi(r)}\right)\right\}}.
$$

\end{lemma}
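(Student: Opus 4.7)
The plan is to combine Lemma \ref{hit-interval} (which sandwiches $\Phi(x_0,r,a,b)$ between explicit ratios of time-integrals of hitting probabilities for a single time) with Lemma \ref{hit-ball} (which gives sharp two-sided estimates on those single-time probabilities). Under Assumption \ref{critical} we have $V(r)\asymp \phi(r)\asymp r^{d_1}$, so in particular $V(2r)\asymp V(r)$, and Lemma \ref{hit-ball} reduces to
\[
\Pp^x(d(X_u,x)\le r)\;\asymp\;\Pp^x(d(X_u,x)\le 2r)\;\asymp\;1\wedge \frac{\phi(r)}{u}
\]
uniformly in $x\in M$. This uniformity in $x$ immediately eliminates the $\inf$ and $\sup$ over $\{d(x,x_0)\le r\}$ appearing in Lemma \ref{hit-interval}, so both bounds there reduce to purely analytic integral estimates.

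Next I would compute the three integrals that appear. For the common denominator, since by hypothesis $\phi(r)\le b-a$, a direct split gives
\[
\int_0^{b-a}\Bigl(1\wedge \tfrac{\phi(r)}{u}\Bigr)\,\dd u
=\int_0^{\phi(r)}1\,\dd u+\int_{\phi(r)}^{b-a}\tfrac{\phi(r)}{u}\,\dd u
=\phi(r)\Bigl\{1+\log\bigl(\tfrac{b-a}{\phi(r)}\bigr)\Bigr\},
\]
which is exactly the denominator in the claimed bounds. For the numerators I would evaluate $\int_a^{\beta}(1\wedge \phi(r)/u)\,\dd u$ with $\beta=b$ (lower bound for $\Phi$) and $\beta=2b-a$ (upper bound for $\Phi$), splitting on whether $a\le \phi(r)$ or $a>\phi(r)$. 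In the case $a\le\phi(r)$ one gets $(\phi(r)-a)+\phi(r)\log(\beta/\phi(r))$; in the case $a>\phi(r)$ one gets $\phi(r)\log(\beta/a)$, and $(\phi(r)-a)_+=0$. Both cases are packaged by the single expression $(\phi(r)-a)_+ +\phi(r)\log(\beta/(a\vee\phi(r)))$, which matches the numerators in the statement.

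Assembling these via Lemma \ref{hit-interval} yields both inequalities. The main obstacle is essentially bookkeeping: one must track the constants that change when passing from $\Pp^x(d(X_u,x)\le r)$ to $\Pp^x(d(X_u,x)\le 2r)$ in Lemma \ref{hit-interval} (this is where $V(2r)\asymp V(r)$ from Assumption \ref{critical} is used), and one must observe that the asymmetry between $b$ in the lower estimate and $2b-a$ in the upper estimate is exactly the origin of the same asymmetry in the conclusion. There is no probabilistic difficulty beyond these two inputs; the condition $\phi(r)\le b-a$ is used only to keep the denominator in its stated logarithmic form (otherwise the denominator would degenerate to $b-a$ and the statement would be trivial).
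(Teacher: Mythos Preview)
Your proof is correct and follows essentially the same route as the paper's: both feed Lemma~\ref{hit-ball} into the two-sided inequality of Lemma~\ref{hit-interval} and evaluate the resulting integrals after reducing $V(r)/V(\phi^{-1}(u))$ to $\phi(r)/u$ via Assumption~\ref{critical}. One cosmetic point: with $\beta=b$ your lower-bound numerator reads $(\phi(r)-a)_+ +\phi(r)\log\bigl(b/(a\vee\phi(r))\bigr)$, which does not literally match the stated $\log\bigl((b-a)/(a\vee\phi(r))\bigr)$, but since $b\ge b-a$ your expression is only larger and the claimed $\gtrsim$ inequality follows.
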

\begin{proof}
By Remark \ref{growth-rate},
\begin{equation*}
\begin{split}
\int_a^{2b-a}\left(1\wedge \frac{V(r)}{V(\phi^{-1}(u))}\right)\,\dd u
&=\int_a^{a\vee \phi(r)}\,\dd u+\int_{a\vee\phi(r)}^{2b-a}\frac{V(r)}{V(\phi^{-1}(u))}\,\dd u\\
&\asymp (\phi(r)-a)_+ +V(r)\log\left(\frac{2b-a}{a\vee\phi(r)}\right)\\
&\asymp (\phi(r)-a)_+ +\phi(r)\log\left(\frac{2b-a}{a\vee \phi(r)}\right)
\end{split}
\end{equation*}
and
\begin{equation}\label{int-p}
\int_0^{b-a}\left(1\wedge \frac{V(r)}{V(\phi^{-1}(u))}\right)\,\dd u
\asymp  \phi(r)\left\{1+\log\left(\frac{b-a}{\phi(r)}\right)\right\}.
\end{equation}
Hence, according to Lemmas \ref{hit-interval} and \ref{hit-ball},
\begin{equation*}
\begin{split}
\Pp^{x_0}(\text{$d(X_s,x_0)\leq r$ for some $s\in (a,b]$}) &\lesssim
\frac{\int_a^{2b-a}\left(1\wedge
\frac{V(r)}{V(\phi^{-1}(u))}\right)\,\dd u}
{\int_0^{b-a}\left(1\wedge \frac{V(r)}{V(\phi^{-1}(u))}\right)\,\dd u}\\
&\asymp
\frac{(\phi(r)-a)_+ +\phi(r)\log\left(\frac{2b-a}{a\vee \phi(r)}\right)}
{\phi(r)\left\{1+\log\left(\frac{b-a}{\phi(r)}\right)\right\}}.
\end{split}
\end{equation*}
We can get the lower bound by the same way.
\end{proof}

\begin{remark}\label{r-hit-0} The statements of Lemma \ref{hit-0} still hold for
$$\Pp^{x_0}(\text{$d(X_s,x_0)\leq r$ for some $s\in [a,b]$}).$$
\end{remark}

\begin{proposition} \label{recurrence}
Under Assumption {\rm \ref{critical}}, $X$ is recurrent and can not hit any point
from every starting point.
\end{proposition}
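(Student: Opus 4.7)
The proof has two parts: recurrence, and polarity of singletons from every starting point. For recurrence, note that under Assumption~\ref{critical}, the scalings \eqref{volume-cond}--\eqref{scaling-cond} force $V(r)\asymp\phi(r)\asymp r^{d_1}$, so $V(\phi^{-1}(t))\asymp t$. The lower bound in \eqref{hkestimate} then yields $p(t,x,y)\gtrsim 1/V(\phi^{-1}(t))\asymp 1/t$ whenever $t\ge\phi(d(x,y))$, so $\int_1^\infty p(t,x,y)\,\dd t=\infty$ for every $x,y\in M$. Since the two-sided estimate makes $p$ strictly positive on $(0,\infty)\times M\times M$ (recall $\mathcal N=\emptyset$ in this setting), Remark~\ref{recurrence-test} delivers irreducibility and then recurrence.

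For polarity, first fix $x_0\in M$ and aim at $\Pp^{x_0}(\exists s>0:X_s=x_0)=0$. Given any $a>0$, apply Lemma~\ref{hit-0} with $b=2a$ and restrict attention to $r>0$ small enough that $\phi(r)<a$. Then $(\phi(r)-a)_+=0$ and $a\vee\phi(r)=a$, so the bound collapses to
$$\Pp^{x_0}\bigl(d(X_s,x_0)\le r \text{ for some } s\in(a,2a]\bigr)\;\lesssim\;\frac{\log 3}{1+\log(a/\phi(r))},$$
which tends to $0$ as $r\to 0$. Since $\{\exists s\in(a,2a]:X_s=x_0\}\subset\{\exists s\in(a,2a]:d(X_s,x_0)\le r\}$ for every $r>0$, this forces $\Pp^{x_0}(\exists s\in(a,2a]:X_s=x_0)=0$. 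Covering $(0,\infty)$ by the countable family $\{(2^n,2^{n+1}]\}_{n\in\Z}$ and a union bound yield $\Pp^{x_0}(\exists s>0:X_s=x_0)=0$.

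To upgrade from the starting point to every starting point, I use the Markov property at time $1$ to write
$$0=\Pp^{x_0}(\exists t>1:X_t=x_0)=\int_M p(1,x_0,z)\,\Pp^z(\exists s>0:X_s=x_0)\,\mu(\dd z),$$
and strict positivity of $p(1,x_0,\cdot)$ forces the integrand to vanish for $\mu$-a.e.\ $z$. A second Markov-property iteration at time $1/n$ then shows that for every $z\in M$ and every $n\ge 1$,
$$\Pp^z(\exists t>1/n:X_t=x_0)=\int_M p(1/n,z,w)\,\Pp^w(\exists s>0:X_s=x_0)\,\mu(\dd w)=0;$$
letting $n\to\infty$ yields $\Pp^z(\exists s>0:X_s=x_0)=0$ for every $z\in M$. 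Since $x_0$ was arbitrary, every singleton is polar from every starting point.

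The main technical obstacle is the $r\searrow 0$ passage in the second paragraph: it crucially exploits the logarithmic denominator in Lemma~\ref{hit-0}, which is precisely where Assumption~\ref{critical} is essential---outside the critical range (e.g., under Assumption~\ref{assmp-1}) the denominator would grow polynomially in $1/\phi(r)$ and the limit could be positive, consistent with the non-polarity of points in the subcritical regime. The subsequent Markov-property bootstrap is then a routine consequence of strict positivity of $p$.
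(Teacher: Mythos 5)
Your proof is correct and follows essentially the same route as the paper's. For recurrence, you observe that Assumption~\ref{critical} forces $V(\phi^{-1}(t))\asymp t$, so the lower heat-kernel bound gives $p(t,x,y)\gtrsim 1/t$ for $t\ge\phi(d(x,y))$, whence $\int_1^\infty p(t,x,y)\,\dd t=\infty$ and Remark~\ref{recurrence-test} applies; the paper does the same via the Green-function computation in Lemma~\ref{lemma-1} together with Remark~\ref{growth-rate}. For polarity of singletons you invoke Lemma~\ref{hit-0}, let $r\to 0$ to kill the hitting probability on each dyadic time block, cover $(0,\infty)$ by such blocks, and then bootstrap from $x_0$ to every starting point by the Markov property at small positive times combined with strict positivity of $p$ (since $\mathcal N=\emptyset$ here). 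The paper does exactly this, the only cosmetic difference being that it takes $r\to0$, then $a\to 0$ and $b\to\infty$ in Lemma~\ref{hit-0} directly rather than using a dyadic covering, and takes a running time $a\to 0$ instead of $1/n\to 0$; these are trivially equivalent.
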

\begin{proof}
In a similar way to Lemma \ref{lemma-1},
we get from
Remark \ref{growth-rate} that
$$\int_0^{\infty}p(t,x,y)\,\dd t=\infty.$$
Therefore, by Remark \ref{recurrence-test}, $X$  is recurrent.

If $\phi(r)\leq a$, then Lemma \ref{hit-0} implies that
$$\Pp^{x_0}(\text{$d(X_s,x_0)\leq r$ for some $s\in (a,b]$})\lesssim
\frac{\log\left(\frac{2b-a}{a}\right)}{1+\log\left(\frac{b-a}{\phi(r)}\right)}.
$$
Hence, by letting first  $r\rightarrow 0$ and then $a\rightarrow 0$ and $b\rightarrow \infty$, we get
$$\Pp^{x_0}(\text{$X_s=x_0$ for some $s>0$})=0.$$
On the other hand, by the Markov property and the fact that the heat kernel $p(t,x,y)$ is strictly positive
for any $t>0$ and $x,y\in M$, it holds that
\begin{align*}
0
&=\Pp^{x_0}(\text{$X_s=x_0$ for some $s>a$})
=\Ee^{x_0}\left(\Pp^{X_a}(\text{$X_s=x_0$ for some $s>0$})\right)\\
&=\int_M p(a,x_0,y)\Pp^y(\text{$X_s=x_0$ for some $s>0$})\,\mu(\dd y),
\end{align*}
and so we obtain
$$\Pp^x(\text{$X_s=x_0$ for some $s>0$})=0 \quad \text{for $\mu$-a.e.\ $x\in M$}.$$
This shows that
\begin{align*}
\Pp^{x}(\text{$X_s=x_0$ for some $s>a$})
=\int_M p(a,x,y)\Pp^y(\text{$X_s=x_0$ for some $s>0$})\,\mu(\dd y)
=0
\end{align*}
for any $x\in M$. We thus complete the proof
by letting $a\rightarrow0$.
\end{proof}

Finally, we will present the

\begin{proof}[Proof of Theorem $\ref{critical-1}$] (i) (\textbf{One-probability statement})\,\,
For the simplification of the proof, we assume that $g(t)\le 1/2$ for any $t>0$.
Let $t_n=2^n$ for $n\geq 1$.
Since
$$\phi(\phi^{-1}(t_{n+1}g(t_n)))=t_{n+1}g(t_n)\leq t_n=t_{n+1}-t_n,$$
Lemma \ref{hit-0} shows that
\begin{equation*}
\begin{split}
&\Pp^{x_0}(\text{$d(X_s,x_0)\leq \varphi(s)$ for some $s\in (t_n,t_{n+1}]$})\\
&\leq
\Pp^{x_0}\left(\text{$d(X_s,x_0)\leq \phi^{-1}\left(t_{n+1}g(t_n)\right)$ for some $s\in (t_n,t_{n+1}]$}\right)\\
&\lesssim
\frac{\log\left(\frac{2t_{n+1}-t_n}{t_n}\right)}{1+\log\left(\frac{t_{n+1}-t_n}{\phi(\phi^{-1}\left(t_{n+1}g(t_n)\right))}\right)}
\asymp \frac{1}{1+\log\left(\frac{t_{n+1}-t_n}{t_{n+1}g(t_n)}\right)}
\lesssim \frac{1}{|\log g(t_n)|}.
\end{split}
\end{equation*}
Therefore,
\begin{equation*}
\begin{split}
\sum_{n=1}^{\infty}\Pp^{x_0}(\text{$d(X_s,x_0)\leq \varphi(s)$ for
some $s\in (t_n,t_{n+1}]$})
&\lesssim \sum_{n=1}^{\infty}\frac{t_n-t_{n-1}}{t_n|\log g(t_n)|}\\
&\lesssim \int_1^{\infty}\frac{1}{t|\log g(t)|}\,\dd t<\infty.
\end{split}
\end{equation*}
Hence, we finish the proof of the one-probability statement by the Borel-Cantelli lemma.

(ii) (\textbf{Zero-probability statement})\,\, In this part, we also assume that $g(t)\le 1/2$ for all $t>0$.
Let $t_n=2^n$ for $n\geq 1$. Define
$$A_n=\left\{\text{$d(X_s,x_0)\leq c\varphi(t_{n+1})$ for some $s\in [t_n,t_{n+1}]$}\right\},$$
where $c\in (0,1)$ satisfies that $c\varphi(t_{n+1})\le
\phi^{-1}(t_ng(t_{n+1}))$ for all $n\ge 1$. Since this inequality
implies that
$$\phi(c\varphi(t_{n+1}))\le {t_n}{g(t_{n+1})}\leq t_n=t_{n+1}-t_n,$$
we have by Lemma \ref{hit-0},
$$\Pp^{x_0}(A_n)\gtrsim \frac{\log\left(\frac{t_{n+1}-t_n}{t_n}\right)}{1+\log \left(\frac{t_{n+1}-t_n}{\phi(c\varphi(t_{n+1}))}\right)}
\gtrsim \frac{1}{|\log g(t_{n+1})|}.$$
Hence, in a similar way as in (i), we have
$$\sum_{n=1}^{\infty}\Pp^{x_0}(A_n)\gtrsim \int_1^{\infty}\frac{1}{t|\log g(t)|}\,\dd t=\infty.$$

Next, we define a sequence of stopping times $\{\sigma_n\}_{n\ge1}$ by
$$\sigma_n=\inf\big\{ t\in [t_n,t_{n+1}]: d(X_t, X_0)\le c\varphi(t_{n+1})\big\},$$ where $\inf \emptyset=\infty.$
By the strong Markov property,  for any $i\ge j+2$,
\begin{align*}
&\Pp^x(A_i\cap A_j)\\
&=\Pp^x(\sigma_j\leq t_{j+1}, \sigma_i\leq t_{i+1})\\
&=\Ee^x\Big(\Pp^{X_{\sigma_j}}\left(\text{$d(X_t,x)\leq c\varphi(t_{i+1})$
for some $t\in [t_i-s,t_{i+1}-s]$}\right)\Big|_{s={\sigma_j}}; \sigma_j\leq t_{j+1}\Big)\\
&\le \Pp^x( \sigma_j\leq t_{j+1})\sup_{d(z,x)\le c\varphi(t_{j+1})} \Pp^z\left(d(X_t,x)\le c\varphi(t_{i+1})\textrm{ for some }t\in[t_i- t_{j+1}, t_{i+1}] \right).
\end{align*}
For any $z\in M$,
\begin{align*} &\Pp^z\left(d(X_t,x)\le c\varphi(t_{i+1})\textrm{ for some }t\in[t_i- t_{j+1}, t_{i+1}] \right)\\
&=\int_{\overline{B(x,c\varphi(t_{i+1}))}} p(t_i-t_{j+1}, z,z_1)\,\mu(\dd z_1)\\
&\quad + \int_{{\overline{B(x,c\varphi(t_{i+1}))}}^c} p(t_i-t_{j+1}, z,z_1) \\
& \qquad \quad \times \Pp^{z_1}\big(d(X_s,x)\leq
c\varphi(t_{i+1})\textrm{ for some  }s\in (0,
t_{i+1}-t_i+t_{j+1}]\big)\,\mu(\dd z_1).
\end{align*}
Since $c\varphi(t_{j+1})\le \phi^{-1}(t_i-t_{j+1})$ for all $i\ge
j+2$, we find from Lemma \ref{comp-heat} that for any $z\in M$  with
$d(z,x)\le c\varphi(t_{j+1})$,
$$ p(t_i- t_{j+1}, z,z_1)\asymp p(t_i- t_{j+1}, x,z_1)$$ and so
\begin{align*}&\Pp^z\left(d(X_t,x)\le c\varphi(t_{i+1})\textrm{ for some }t\in[t_i- t_{j+1}, t_{i+1}] \right)\\
&\asymp \Pp^x\left(d(X_t,x)\le c\varphi(t_{i+1})\textrm{ for some
}t\in[t_i- t_{j+1}, t_{i+1}] \right).\end{align*} This, along with
Lemma \ref{hit-0} and Remark \ref{r-hit-0}, further yields that for
all $i\ge j+2$,
\begin{align*}
&\sup_{d(z,x)\le c\varphi(t_{j+1})} \Pp^z\left(d(X_t,x)\le c\varphi(t_{i+1})\textrm{ for some }t\in[t_i- t_{j+1}, t_{i+1}] \right)\\
&\lesssim  \Pp^x\left(d(X_t,x)\le c\varphi(t_{i+1})\textrm{ for some }t\in[t_i- t_{j+1}, t_{i+1}] \right) \\
&\lesssim \bigg({\log \frac{2t_{i+1}-t_i+t_{j+1}}{t_i-t_{j+1}}}\bigg)\bigg( {1+ \log \frac{t_{i+1}-t_i+t_{j+1}}{\phi(c\varphi(t_{i+1}))}}\bigg)^{-1}\\
&\lesssim \bigg({\log \frac{t_{i+1}-t_i}{t_i}}\bigg)\bigg({1+ \log \frac{t_{i+1}-t_i}{\phi(c\varphi(t_{i+1}))}}\bigg)^{-1}\\
&\lesssim \Pp^x( \sigma_i\leq t_{i+1}).\end{align*}
Therefore, there is a constant $C>0$ such that for any $i\ge j+2$,
$$\Pp^x(A_i\cap A_j)\le C\Pp^x(A_i)\Pp^x(A_j).$$

Having both conclusions above at hand, we can follow the argument of
the zero-probability statement in Theorem \ref{lower-rate} and
obtain
 $$\Pp(\limsup A_k)=1.$$
Hence, the desired assertion follows from the fact that
\begin{align*} A_k\subset \big\{ d(X_s,X_0)\le \varphi(s)\textrm{ for some }s\in[t_k, t_{k+1}] \big\}.\end{align*}
The proof is complete.
 \end{proof}

 \ \

\begin{ack}The research of Yuichi Shiozawa is supported in part by the Grant-in-Aid for Scientific Research (C) 26400135.
 The research of Jian Wang is supported by National
Natural Science Foundation of China (No.\ 11201073 and 11522106), the JSPS postdoctoral fellowship
(26$\cdot$04021), National Science Foundation of Fujian Province (No.\ 2015J01003), and the Program for Nonlinear Analysis and Its Applications (No. IRTL1206). \end{ack}

\end{document}